\newtheorem{thm}{Theorem}[section]
\newtheorem{lem}[thm]{ Lemma}
\newtheorem{cor}[thm]{ Corollary}
\theoremstyle{definition}
\newtheorem{rem}[thm]{ Remark}
\newcommand{\R}{\mathbb{R}}
\def\O{\Omega}
\begin{document}

\title{Minimal generating sets of Reidemeister moves}
\author[Michael Polyak]{Michael Polyak}

\address{Department of mathematics, Technion, Haifa 32000, Israel}
\email{polyak@math.technion.ac.il}

\begin{abstract}
It is well known that any two diagrams representing the same
oriented link are related by a finite sequence of Reidemeister moves
$\O1$, $\O2$ and $\O3$. Depending on orientations of fragments
involved in the moves, one may distinguish 4 different versions of
each of the $\O1$ and $\O2$ moves, and 8 versions of the $\O3$ move.
We introduce a minimal generating set of 4 oriented Reidemeister
moves, which includes two $\O1$ moves, one $\O2$ move, and one $\O3$
move. We then study which other sets of up to 5 oriented moves
generate all moves, and show that only few of them do. Some commonly
considered sets are shown not to be generating. An unexpected
non-equivalence of different $\O3$ moves is discussed.
\end{abstract}

\keywords{Reidemeister moves, knot and link diagrams}

\subjclass[2000]{57M25, 57M27}

\maketitle


\section{Introduction} \label{s:intro}
A standard way to describe a knot or a link in $\R^3$ is via its
{\em diagram}, i.e. a generic plane projection of the link such that
the only singularities are transversal double points, endowed with
the over/undercrossing information at each double point. Two
diagrams are equivalent if there is an orientation-preserving
diffeomorphism of the plane that carries one diagram to the other
diagram. A classical result of Reidemeister \cite{Re} states that
any two diagrams of isotopic links are related by a finite sequence
of simple moves\footnote{Our notation makes no distinction between a
move and the inverse move.} $\O1$, $\O2$, and $\O3$, shown in Figure
\ref{fig:Reidem}.

\begin{figure}[htb]
\centerline{\includegraphics[width=5in]{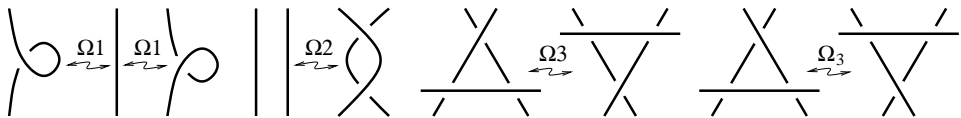}}
\caption{\label{fig:Reidem} Reidemeister moves}
\end{figure}

Here we assume that two diagrams $D$, $D'$ related by a move
coincide outside an oriented embedded disk $C\subset \R^2$ (with an
orientation of $C$ induced by the standard orientation of $\R^2$),
called the {\em changing disk}, and look as a corresponding pair
$R$, $R'$ of arc diagrams in Figure \ref{fig:Reidem} inside $C$. In
other words, there are two orientation-preserving diffeomorphisms
$f,f':C\to B^2$ of $C$ to the standard oriented 2-disk $B^2$, such
that $f_{\partial C}=f'_{\partial C}$ and $f(C\cap D)=R$, $f'(C\cap
D')=R'$.

To deal with oriented links we consider oriented diagrams. Depending
on orientations of fragments involved in the moves, one may
distinguish four different versions of each of the $\O1$ and $\O2$
moves, and eight versions of the $\O3$ move, see Figures
\ref{fig:OrientedSet1}, \ref{fig:OrientedSet2}, and
\ref{fig:OrientedSet3} respectively.

\begin{figure}[htb]
\centerline{\includegraphics[height=0.73in]{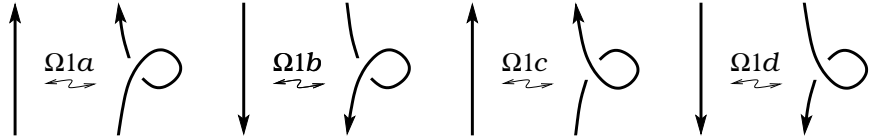}}
\caption{\label{fig:OrientedSet1} Oriented Reidemeister moves of
type 1}
\end{figure}

\begin{figure}[htb]
\centerline{\includegraphics[width=5in]{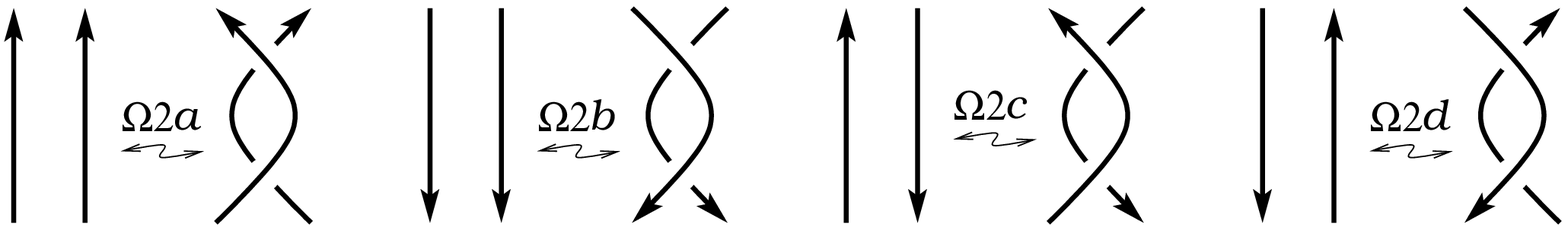}}
\caption{\label{fig:OrientedSet2} Oriented Reidemeister moves of
type 2}
\end{figure}

\begin{figure}[htb]
\centerline{\includegraphics[width=4.6in]{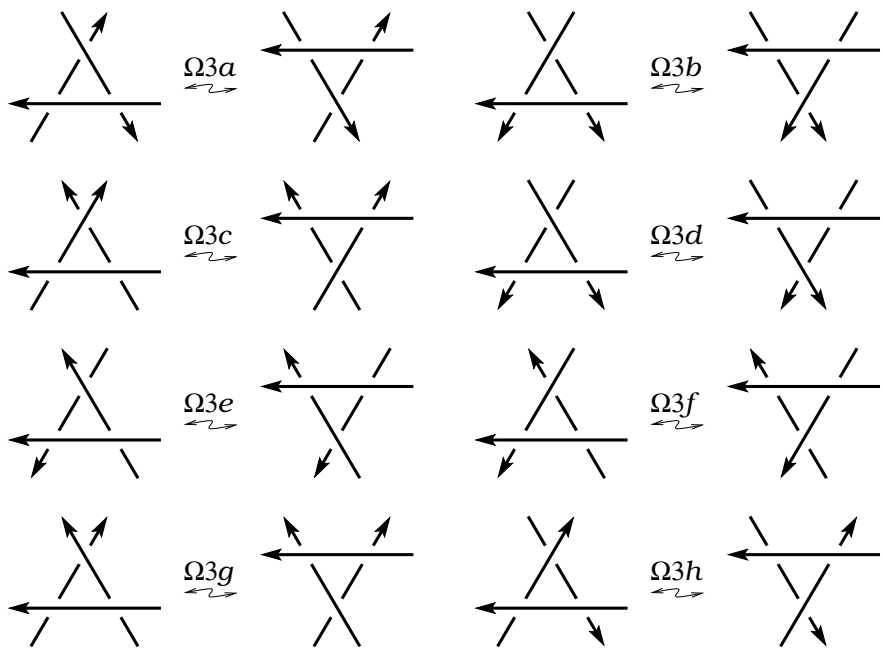}}
\caption{\label{fig:OrientedSet3} Oriented Reidemeister moves of type 3}
\end{figure}

When one checks that a certain function of knot or link diagrams
defines a link invariant, it is important to minimize the number of
moves. We will call a collection $S$ of oriented Reidemeister moves
a {\em generating set}, if any oriented Reidemeister move $\O$ may
be obtained by a finite sequence of isotopies and moves from the set
$S$ inside the changing disk of $\O$.

While some dependencies between oriented Reidemeister moves are
well-known, the standard generating sets of moves usually include
six different $\O3$ moves, see e.g. Kauffman \cite{Ka}. For sets
with a smaller number of $\O3$ moves there seems to be a number of
different, often contradictory, results. In particular, Turaev
\cite[proof of Theorem 5.4]{Tu} introduces a set of five oriented
Reidemeister moves with only one $\O3$ move. There is no proof (and
in fact we will see in Section \ref{sec:othersets} that this
particular set is not generating), with the only comment being a
reference to a figure where, unfortunately, a move $\O2$ which does
not belong to the set is used. Wu \cite{Wu} uses the same set of
moves citing \cite{Tu}, but additionally incorrectly puts the total
number of oriented $\O3$ moves at 12 (instead of 8). Kaufmann
\cite[page 90]{Ka} includes as an exercise a set of all $\O1$ and
$\O2$ moves together with two $\O3$ moves. Meyer \cite{Me} uses a
set with four $\O1$, two $\O2$, and two $\O3$ moves and states
(again without a proof) that the minimal number of needed $\O3$
moves is two. The number of $\O3$ moves used by \"Ostlund \cite{Oe}
is also two, but his classification of $\O3$ moves works only for
knots and is non-local (depending on the cyclic order of the
fragments along the knot). Series of exercises in Chmutov et al.
\cite{CDM} (unfortunately without proofs) suggest that only one
$\O3$ suffices, but this involves all $\O2$ moves. These
discrepancies are most probably caused by the fact that while many
people needed some statement of this kind, it was only an auxiliary
technical statement, a proof of which would be too long and would
take the reader away from the main subject, so only a brief comment
was usually made. We believe that it is time for a careful
treatment. In this note we introduce a simple generating set of four
Reidemeister moves, which includes two $\O1$ moves, one $\O2$ move
and one $\O3$ move:

\begin{figure}[htb]
\centerline{\includegraphics[width=5in]{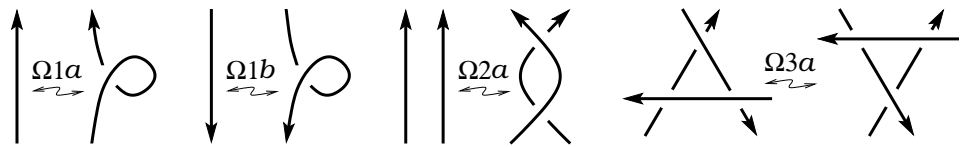}}
\caption{\label{fig:MinimalSet} A generating set of Reidemeister
moves}
\end{figure}

\begin{thm}\label{thm:main}
Let $D$ and $D'$ be two diagrams in $\R^2$, representing the same
oriented link. Then one may pass from $D$ to $D'$ by isotopy and a
finite sequence of four oriented Reidemeister moves $\O1a$,  $\O1b$,
$\O2a$, and $\O3a$, shown in Figure \ref{fig:MinimalSet}.
\end{thm}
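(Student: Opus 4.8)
The plan is to derive the theorem from Reidemeister's classical theorem together with the assertion that $S=\{\O1a,\O1b,\O2a,\O3a\}$ is a generating set in the sense defined above. Applying Reidemeister's theorem to the underlying unoriented diagrams of $D$ and $D'$ and then recording the orientations produces a finite sequence of oriented Reidemeister moves carrying $D$ to $D'$; replacing each move of this sequence by a sequence of isotopies and $S$-moves performed inside that move's own changing disk yields the sequence required by the theorem. Thus everything reduces to showing that each of the $4+4+8=16$ oriented Reidemeister moves lies in the set generated by $S$, where one may freely use the four moves of $S$, their inverses (a move being identified with its inverse), and isotopies of a changing disk rel its boundary.

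I would treat the $\O1$ and $\O2$ moves first, as a manageable warm-up. For the two missing $\O1$ moves, note that although the changing disk of an $\O1$ move contains a single arc, one may fold that arc and perform $\O2a$ and $\O3a$ moves between two of its strands; the classical ``slide the curl through the strand'' manipulation (create a bigon with an $\O2$ move, simplify with an $\O3$ move, remove the bigon on the other side) is writhe-preserving and converts a curl on one side of a strand into a curl of the same sign on the other side, so starting from $\O1a$ and $\O1b$ — which must carry curls of opposite sign, since no writhe-changing move other than $\O1a^{\pm},\O1b^{\pm}$ is available — one reaches all four curls. For the three missing $\O2$ moves the changing disk carries two strands; here the tools are isotopy, $\O3a$, and the ``detour'' (conjugation by a pair of $\O2$ moves) together with auxiliary cancelling curls inserted by the $\O1$ moves, which allow one to toggle the over/under of a strand near the bigon. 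Running through the four orientation-and-crossing combinations settles all $\O2$ moves.

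The $\O3$ moves are the heart of the matter and, I expect, the main obstacle. With all $\O1$ and $\O2$ moves now in hand, the basic engine is again the detour: sliding a strand of the triangle around to the far side of the crossing it does not meet, by a pair of $\O2$ moves, reverses the over/under of the moving strand, and together with isotopies this relates several of the eight $\O3$ moves to $\O3a$. The subtle obstruction — this is the ``unexpected non-equivalence'' promised in the introduction — is that detours and isotopies alone do \emph{not} join all eight moves: modulo $\O2$ moves the eight $\O3$ moves fall into more than one class, so $\O3a$ together with all $\O2$ moves does not reach every $\O3$ move. Closing the remaining gap forces an essential use of the $\O1$ moves of $S$: one inserts a curl by $\O1a$, transports it across the triangular region past all three crossings by a suitable sequence of $\O2$ and $\O3a$ moves, and removes it by $\O1b$, the signs of the two curls cancelling so that the net effect is precisely a $\O3$ move outside the $\O2$-orbit of $\O3a$. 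Checking that such a curl can actually be pushed through the triangle in each orientation pattern, and bookkeeping which $\O3$ moves require it, is where the real work lies. The pervasive difficulty, felt most acutely for $\O3$, is that reflections of the plane are forbidden, so the two chiralities of curls, bigons and triangles can only be bridged by honest move-sequences, and organizing these sequences calls for a careful case analysis indexed by the orientations of the three strands.
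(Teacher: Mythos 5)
Your overall reduction (show that each of the sixteen oriented moves is generated by $S=\{\O1a,\O1b,\O2a,\O3a\}$ inside its changing disk) is the right frame, but the substance goes wrong in three places. First, the mechanism you propose for the missing $\O1$ moves cannot work: both the writhe and the winding number of the fragment are preserved by isotopy and by all $\O2$ and $\O3$ moves, so ``sliding a curl'' can never change the curl's type --- it cannot change the sign of its crossing, nor move it to the other side of the strand (that would change the winding contribution by $\pm 2$). Your premise is also false: in the set of the theorem $\O1a$ and $\O1b$ both add a \emph{positive} curl (they differ in winding, not in crossing sign), so even granting your sliding step you would never reach the negative curls $\O1c$, $\O1d$. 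The correct route (\"Ostlund's trick, Lemma \ref{lem:O1}) is to fold the strand and apply an antiparallel second move ($\O2c$ or $\O2d$), creating a cancelling pair consisting of one positive and one negative curl, and then erase the positive one by the inverse of $\O1a$ or $\O1b$; in particular the $\O1$ step must come \emph{after} the missing $\O2$ moves are obtained, not before.

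Second --- and this is the heart of the theorem --- the derivation of $\O2c$ and $\O2d$ from $\O1a$, $\O1b$, $\O2a$, $\O3a$ (Lemma \ref{lem:O2cd}) is precisely the step your sketch leaves vague (``toggle the over/under \dots running through the combinations settles all $\O2$ moves''), and it is not a routine detour argument: Section \ref{sec:othersets} shows the analogous statement with $\O3b$ in place of $\O3a$ is \emph{false}, so any proof must exploit a specific feature of $\O3a$ and exhibit an explicit sequence; moreover $\O2b$ is genuinely harder and in the paper is reached only after first deriving the extra type-three move $\O3c$. Third, your diagnosis of the $\O3$ moves is backwards: once all four $\O2$ moves are available, all eight $\O3$ moves follow from $\O3a$ and type-two moves alone (Lemmas \ref{lem:O3b}--\ref{lem:O3}); no curl transport and no $\O1$ moves are needed there, and the ``unexpected non-equivalence'' of $\O3a$ versus $\O3b$ concerns which $\O2$ moves they can help generate, not an obstruction among $\O3$ moves modulo $\O2$. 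So the proposal misplaces the real difficulty (which lies in the $\O2$ moves) and rests on an impossible mechanism for the $\O1$ moves.
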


This generating set of Reidemeister moves has the minimal number of
generators. Indeed, it is easy to show that any generating set
should contain at least one move of each of the types two and three;
Lemma \ref{lem:O1} in Section \ref{sec:othersets} implies that there
should be at least two moves of type one. Thus any generating set of
Reidemeister moves should contain at least four moves.

Our choice of the move $\O3a$ as a generator may look unusual, since
this move (called a cyclic $\O3$ move, see e.g. \cite{Ka}) is rarely
included in the list of generators, contrary to a more common move
$\O3b$, which is the standard choice motivated by the braid
theory\footnote{This is the only $\O3$ move with all three positive
crossings.}. The reason is that, unexpectedly, these moves have
different properties, as we discuss in detail in Section
\ref{sec:othersets}. Indeed, Theorem \ref{thm:othersets} below
implies that any generating set of Reidemeister moves which includes
$\O3b$ has at least five moves. If we consider sets of five
Reidemeister moves which contain $\O3b$, then it turns out that out
of all combinations of $\O1$ and $\O2$ moves, only 4 sets generate
all Reidemeister moves. The only freedom is in the choice of $\O1$
moves, while $\O2$ moves are uniquely determined:

\begin{thm}\label{thm:othersets} Let $S$ be a set of at most five
Reidemeister moves which contains only one move, $\O3b$, of type
three. The set $S$ generates all Reidemeister moves if and only if
$S$ contains $\O2c$ and $\O2d$ and contains one of the pairs
($\O1a$, $\O1b$), ($\O1a$, $\O1c$), ($\O1b$, $\O1d$), or ($\O1c$,
$\O1d$).
\end{thm}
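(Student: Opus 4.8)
The plan is to split the statement into a "sufficiency" direction (the four listed sets generate) and a "necessity" direction (no other set of $\le 5$ moves containing $\O3b$ as its only type-3 move can generate). For sufficiency, I would not re-derive everything from scratch: by Theorem \ref{thm:main} it is enough to show that each of the four candidate sets generates the four moves $\O1a$, $\O1b$, $\O2a$, $\O3a$ of the minimal set. Since each candidate set already contains two $\O1$ moves and both $\O2c$ and $\O2d$, the real work is to produce $\O3a$ and the missing $\O2$ and $\O1$ moves. I would first show that $\O2c$ and $\O2d$ together generate all four $\O2$ moves (rotating a strand past a cap/cup-like picture, or using an $\O1$ move plus isotopy to flip a crossing's role); then show $\{\O2c,\O2d,\O3b\}$ generates $\O3a$ by the standard "detour"/triangle argument — slide one strand around using $\O2$ moves to reposition the three strands into the configuration where $\O3b$ applies, then $\O2$ back. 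Finally, for the $\O1$ part, I would invoke Lemma \ref{lem:O1}: it presumably says that from one $\O1$ move of each "sign type" (a positive-curl and a negative-curl one) plus $\O2$ moves one gets all four $\O1$ moves, and each of the four listed pairs contains one curl of each type; the pairs that are *excluded* — $(\O1a,\O1d)$ and $(\O1b,\O1c)$ — are exactly the two "same-writhe-behavior" pairs that Lemma \ref{lem:O1} rules out.

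For necessity I would argue by obstruction/invariant. The key is to produce, for every move $\O$ not in a given set, a quantity that is invariant under all moves in $S$ but changed by $\O$, thereby showing $S$ cannot generate $\O$. Concretely: (1) a writhe-type or "self-linking" count separating the $\O1$ curls into two classes forces at least two $\O1$ moves and forbids the pairs $(\O1a,\O1d)$, $(\O1b,\O1c)$ — this is Lemma \ref{lem:O1} applied again, now in the contrapositive. (2) For the $\O2$ moves, I would exhibit an invariant — something like a count of crossings weighted by orientation pattern, e.g. the number of "coherently oriented" vs. "anti-oriented" clasps modulo the moves in $S$ — that is preserved by $\O1$ moves, by $\O3b$, and by all but the required $\O2$ types, showing that omitting $\O2c$ or $\O2d$ (or replacing with $\O2a,\O2b$) breaks generation. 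The subtle point the paper flags — that $\O3b$ behaves differently from $\O3a$ — should surface here: $\O3b$ is the all-positive-crossing move, so it is "invisible" to certain orientation-refined crossing counts, which is precisely why only $\O2c,\O2d$ (and not $\O2a,\O2b$) can compensate. I would make this precise by defining the invariant on arc diagrams inside the changing disk, checking its behavior case by case on the moves, and concluding that any $S$ violating the stated conditions preserves some such invariant that $\O3a$ (or a missing $\O2$) does not.

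The main obstacle I expect is the necessity direction, and specifically constructing the right invariant(s) distinguishing $\{\O2c,\O2d\}$ from the other pairs of $\O2$ moves in the presence of $\O3b$ but absence of $\O3a$. A naive writhe or crossing-sign count is invariant under *all* $\O2$ and $\O3$ moves and so says nothing; I need a finer invariant that is sensitive to the *orientation pattern* of the bigon created/destroyed by an $\O2$ move while remaining insensitive to $\O3b$. The likely fix is to work with a function of oriented arc diagrams in the disk that records, for each pair of strands, whether they are coherently or oppositely oriented together with a crossing count, reduced modulo the relations imposed by the moves actually in $S$; verifying invariance under $\O3b$ (eight sub-cases, but only the all-positive one) and under $\O1a,\O1b,\dots$ is bookkeeping, but getting the definition right so that it genuinely obstructs the excluded $\O2$ combinations is the crux. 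Once that invariant is in hand, the remaining cases (too few $\O1$ moves, wrong $\O1$ pair) follow from Lemma \ref{lem:O1}, and sufficiency follows from Theorem \ref{thm:main} by the reductions above.
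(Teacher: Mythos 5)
Your overall skeleton (split into sufficiency and necessity, reduce sufficiency to Theorem \ref{thm:main} via deriving $\O3a$ from $\O2c$, $\O2d$, $\O3b$, handle type-one moves with writhe and winding number) matches the paper, but the heart of the theorem --- the necessity direction --- is left unproven, and the specific kind of invariant you sketch cannot work. To rule out, say, a set containing $\O2a$ and $\O2c$ you must exhibit a quantity preserved by all $\O1$ moves, by $\O3b$, by $\O2a$ and $\O2c$, yet changed by $\O2b$ or $\O2d$. Any ``count of crossings weighted by sign and by whether the two strands are coherently or oppositely oriented'' is blind to the difference between $\O2c$ and $\O2d$ (each creates one positive and one negative antiparallel crossing) and likewise between $\O2a$ and $\O2b$, so it cannot separate the allowed pair $(\O2c,\O2d)$ from the forbidden mixed pairs. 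Worse, as the paper stresses (Figure \ref{fig:composition}), no tangle invariant that behaves well under composition with a crossing can do the job, since after composing with one crossing the two sides of $\O2d$ become related by $\O2c$. The paper's solution is qualitatively different from a count: it decorates the arcs of a two-component $(2,2)$-tangle with integer weights (signed overcrossings with $D_2$ met along $D_1$, signed undercrossings with $D_1$ met along $D_2$) and uses the deliberately non-composable property that all weights are non-negative; this positivity is preserved by $\O1$ moves, by $\O3b$ (which has only positive crossings), by $\O2a$, and componentwise by $\O2c$ (second component) and $\O2d$ (first component), but fails for the explicit example tangle, giving Corollaries \ref{cor:O2acd} and \ref{cor:O2bcd} and, with orientation reversal, eliminating all mixed pairs and showing at least two $\O2$ moves are required.

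You also do not address the remaining case $(\O2a,\O2b)$ --- two $\O1$ moves together with $\O2a$, $\O2b$, $\O3b$ --- which the paper handles with yet another, separate invariant: smooth every crossing respecting orientation and count clockwise and counterclockwise circles $C^-$, $C^+$. The moves $\O2a$, $\O2b$, $\O3b$ preserve the smoothed diagram up to isotopy, each $\O1$ move increments exactly one of $C^\pm$, so depending on which pair of $\O1$ moves is present, one of $C^+$, $C^++C^--w$, $C^++C^-+w$ is preserved by $S$ but changed by $\O2c$ and $\O2d$ (Lemma \ref{lem:O2abO3b}). Finally, on the sufficiency side your claim that $\O2c$ and $\O2d$ by themselves generate all four $\O2$ moves is not justified: in the paper $\O2b$ is obtained only with the help of $\O1d$ and the derived move $\O3c$; this is repairable because the candidate sets also contain two $\O1$ moves and $\O3b$, but it has to be carried out along the lines of Section \ref{sec:main}. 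As it stands, your proposal correctly identifies the crux (the orientation-sensitive obstruction in the presence of $\O3b$) but does not supply it, and the invariant family you propose provably cannot supply it.
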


\begin{figure}[htb]
\centerline{\includegraphics[width=5in]{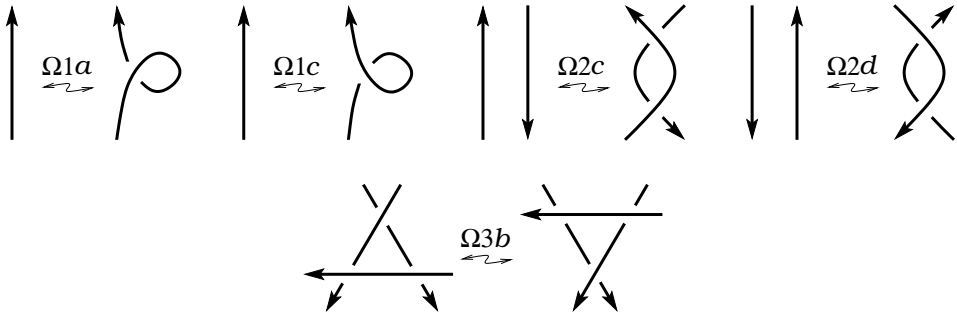}}
\caption{\label{fig:BigSet} A generating set of Reidemeister moves
containing $\O3b$}
\end{figure}

One of these generating sets is shown in Figure \ref{fig:BigSet}. It
is interesting to note that while (by Markov theorem) the set
$\O1a$, $\O1c$, $\O2a$, $\O2b$ and $\O3b$ shown in Figure
\ref{fig:NotSet} allows one to pass between any two braids with
isotopic closures, this set is not sufficient to connect any pair of
general diagrams representing the same link. This means that some
extra moves should appear in the process of transforming a general
link diagram into a closed braid. And indeed, in all known
algorithms of such a transformation the additions moves occur. For
example, in Vogel's algorithm \cite{Vo} the moves $\Omega2c$ and
$\Omega2d$ are the main steps of the algorithm.

\begin{figure}[htb]
\centerline{\includegraphics[width=5in]{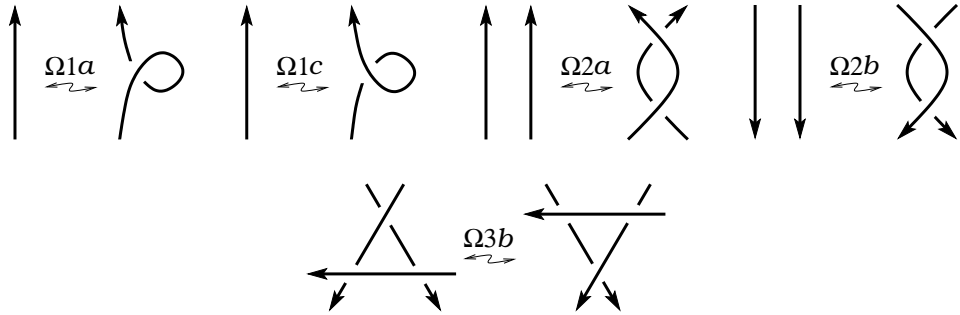}}
\caption{\label{fig:NotSet} This is not a generating set}
\end{figure}

Even more unexpected is the fact that all type one moves together
with $\O2a$, $\O2c$ (or $\O2d$) and $\O3b$ are also insufficient,
see Figure \ref{fig:NotSet2} .

\begin{figure}[htb]
\centerline{\includegraphics[width=5in]{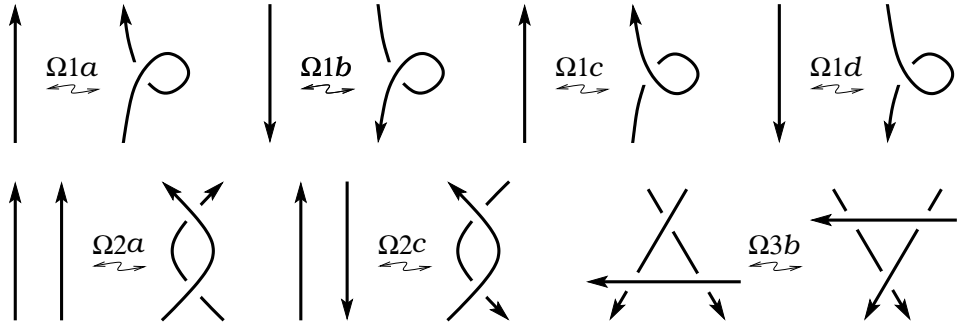}}
\caption{\label{fig:NotSet2} Another set which is not generating}
\end{figure}

\begin{rem}
For non-oriented links a sequence of Reidemeister moves can be
arranged in such a form that first a number of $\O1$  moves are
performed, then $\O2$ moves are performed, after this $\O3$ moves
are performed, and finally $\O2$ moves have to be performed again,
see \cite{Co}. It would be interesting to find such a theorem for
oriented case.
\end{rem}

All our considerations are local, and no global realization
restrictions are involved. Therefore all our results hold also for
virtual links.

Section \ref{sec:main} is dedicated to the proof of Theorem
\ref{thm:main}. In Section \ref{sec:othersets} we discuss various
generating sets which contain $\O3b$ and prove Theorem
\ref{thm:othersets}

We are grateful to O.~Viro for posing the problem and to S.~ Chmutov
for valuable discussions. The author was supported by an ISF grant
1261/05 and by the Joseph Steiner family foundation.

\section{A minimal set of oriented Reidemeister moves}
\label{sec:main}

In this section we prove Theorem \ref{thm:main} in several easy
steps. The first step is to obtain $\O2c$, $\O2d$:

\begin{lem}\label{lem:O2cd}
The move $\O2c$ may be realized by a sequence of $\O1a$, $\O2a$ and
$\O3a$ moves.  The move $\O2d$ may be realized by a sequence of
$\O1b$, $\O2a$ and $\O3a$ moves.
\end{lem}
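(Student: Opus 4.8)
The plan is to realize the move $\O2c$ by an explicit short sequence of the allowed moves, and to treat $\O2d$ by the mirror-symmetric argument, in which the only difference is that the auxiliary kink used is of type $\O1b$ rather than $\O1a$. For $\O2c$ the idea is the familiar one of using a kink to flip a local orientation, so that the orientation pattern of $\O2c$ (which is not among the generators) is replaced by that of $\O2a$ (which is). Concretely: take the nontrivial side of $\O2c$, a bigon bounded by two arcs $A$ and $B$ of the diagram, and begin by applying $\O1a$ to one of these arcs, creating a small kink next to the bigon. The kink produces a short sub-arc whose orientation is reversed relative to the arc carrying it; with the right placement this sub-arc, together with the other arc of the bigon, is in the orientation pattern of $\O2a$, so that an $\O2a$ move becomes available.

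After performing $\O2a$, the diagram coincides with the target side of $\O2c$ except for the presence of the kink, which will in general be clasped with the other strand. One uses $\O3a$ (once or a few times) to unclasp it --- i.e.\ to push the offending strand out from the kink --- and then a final $\O1a$ move to erase the freed kink. Counting crossings along the way shows that nothing is left over, so the net effect of this sequence of $\O1a$, $\O2a$ and $\O3a$ moves (read in the appropriate direction) is exactly $\O2c$.

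The actual work is entirely in the verification that each move in the sequence is the precise oriented version named, rather than one of its variants: the orientations and the over/under information of the three strands at the triple point must match $\O3a$ and none of the other seven $\O3$ moves, and the bigon removed must be $\O2a$ rather than $\O2b$, $\O2c$ or $\O2d$. This is the real content of the lemma --- it pins down that the auxiliary kink has to be $\O1a$ (and, in the $\O2d$ case, $\O1b$), and it also explains why $\O3a$ is needed at all: it is precisely the move that unclasps the auxiliary kink before it can be removed. I expect this compatibility check --- a careful but routine inspection of the figures, with no freedom left once the kink is chosen --- to be the only genuine obstacle.
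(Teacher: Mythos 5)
Your kink trick is essentially the paper's own proof: the figure proving Lemma \ref{lem:O2cd} realizes $\O2c$ (read from the trivial side) by one $\O1a$ kink, one $\O2a$ poke of the other strand through the loop, one $\O3a$ slide past the kink crossing, and a final $\O1a$ erasing the kink, with the mirror sequence based on $\O1b$ giving $\O2d$. The compatibility check you defer is exactly what that figure records --- a single $\O3a$ suffices --- and the only slip in your narrative is the ordering: starting from the clasped side the $\O3a$ must come before the $\O2a$ (no $\O2a$-bigon exists right after the kink is added), which is harmless since, as you note, the sequence may be read in the appropriate direction.
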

\begin{proof}
$$\centerline{\includegraphics[width=4.5in]{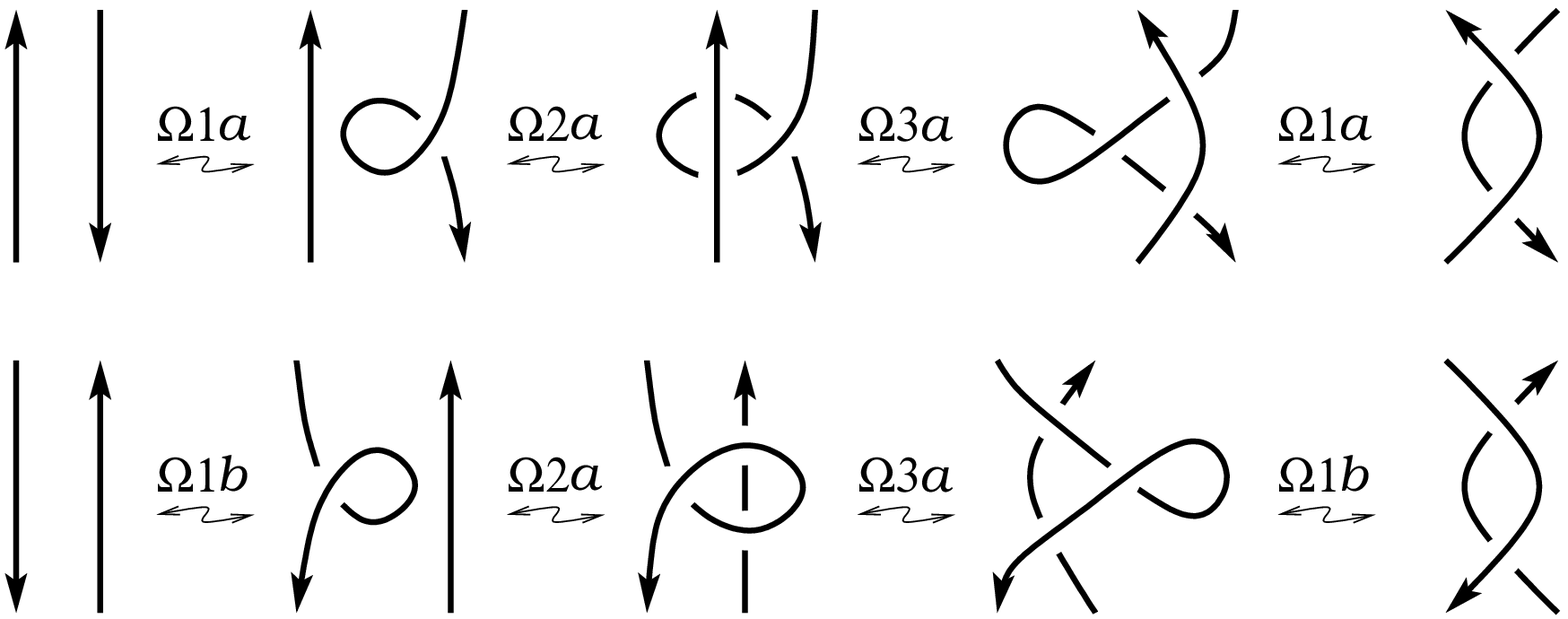}}$$
\end{proof}

Now the remaining moves of type one may be obtained as in \cite{Oe}:

\begin{lem}[\cite{Oe}]\label{lem:O1}
The move $\O1c$ may be realized by a sequence of $\O1b$ and $\O2d$
moves.  The move $\O1d$ may be realized by a sequence of $\O1a$ and
$\O2c$ moves.
\end{lem}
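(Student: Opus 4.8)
The plan is to realize each "missing" type-one move by pushing a small kink through a double-point of a strand running the other way, using one type-two move to create or absorb the extra crossings. Concretely, for the move $\O1c$: I would start from the left-hand side of the desired $\O1c$ move (a straight strand, oriented, with the appropriate sign/handedness of kink to be produced), take the strand that in $\O1c$ acquires a kink, and instead first introduce a pair of opposite crossings on it against a neighbouring portion of the same strand by an $\O2d$ move — the orientations in $\O2d$ are exactly the ones compatible with the strand orientations forced by $\O1c$. The resulting picture is isotopic (as a planar arc diagram inside the changing disk, so only an ambient isotopy fixing $\partial C$ is used) to the picture obtained from the right-hand side of $\O1c$ by an $\O1b$ move applied to the kinked portion; reading the chain backwards gives $\O1c$ as a composition of $\O1b$, $\O2d$, and isotopy. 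The argument for $\O1d$ is the mirror/orientation-reversed version: replace $\O1b$ by $\O1a$ and $\O2d$ by $\O2c$ throughout.

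The natural way to present this is a single figure (as in \cite{Oe}) showing, for each of the two statements, the chain of three or four diagrams related successively by the allowed moves and planar isotopies, with each arrow labelled by the move used. I would draw the $\O1c$ chain in one row and the $\O1d$ chain below it, checking at each arrow that (i) the two diagrams really do agree outside a small embedded disk, (ii) inside that disk they are precisely one of the listed local pictures $\O1a,\O1b,\O2c,\O2d$ (up to an orientation-preserving diffeomorphism of the disk, as in the definition of a move), and (iii) the crossing signs match — this last point is where one must be careful, since $\O1a$ versus $\O1b$ and $\O1c$ versus $\O1d$ differ precisely in the sign of the kink, and similarly $\O2c$ versus $\O2d$ differ in which strand is over.

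The only real obstacle is bookkeeping: making sure the orientations of all arcs in the intermediate pictures are consistent with the orientations prescribed in Figures \ref{fig:OrientedSet1} and \ref{fig:OrientedSet2}, so that the move invoked at each step is genuinely one of $\O1b$ (resp. $\O1a$) and $\O2d$ (resp. $\O2c$) and not some other oriented version. Since the arc involved carries a single coherent orientation throughout, and the type-two move used is forced by that orientation, this is routine once the figure is drawn correctly; there is no global or realizability issue because, as the introduction notes, everything takes place inside the changing disk. Hence no separate verification beyond inspecting the figure is needed, and the lemma follows exactly as in \cite{Oe}.
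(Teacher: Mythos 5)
Your proposal is correct and follows essentially the same route as the paper, whose proof is exactly the figure you describe: apply $\O2d$ (resp. $\O2c$) to the strand against itself, isotope inside the changing disk so the picture appears as the $\O1c$ (resp. $\O1d$) kink together with a kink removable by $\O1b$ (resp. $\O1a$), and cancel that kink. The orientation and sign bookkeeping you flag is precisely what the paper's figure (after \"Ostlund) encodes, so nothing further is needed.
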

\begin{proof}
$$\centerline{\includegraphics[width=4.8in]{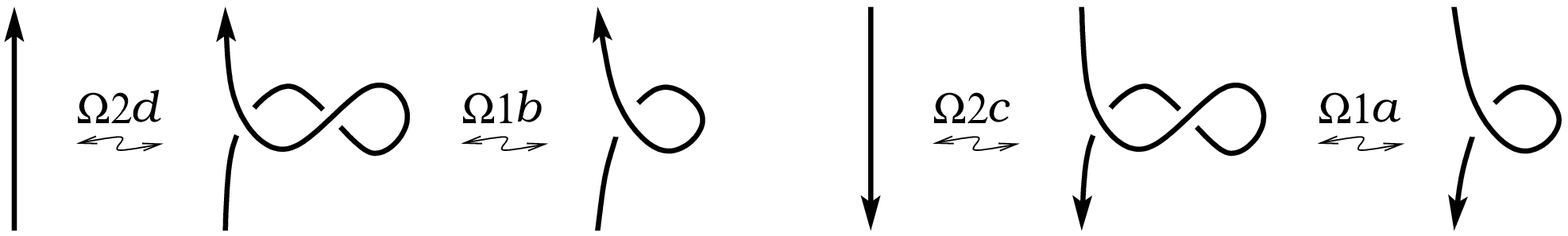}}$$
\end{proof}

This concludes the treatment of all $\O1$ and $\O2$ moves, except
for $\O2b$; we will take care of it later. Having in mind Section
\ref{sec:othersets}, where we will deal with $\O3b$ instead
of $\O3a$, we will first consider $\O3b$:

\begin{lem}\label{lem:O3b}
The move $\O3b$ may be realized by a sequence of $\O2c$, $\O2d$,
and $\O3a$ moves.
\end{lem}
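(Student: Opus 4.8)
The plan is to realize $\O3b$ purely by a local picture argument, drawing the intermediate diagrams and labelling each transition with one of the permitted moves $\O2c$, $\O2d$, $\O3a$. The starting point is the observation that $\O3a$ (the cyclic $\O3$ move) differs from $\O3b$ (the all-positive braid-like $\O3$ move) in the cyclic pattern of orientations of the three strands: in $\O3a$ the three strands, read in the cyclic order around the triangle, alternate in a way that makes one of them ``transverse'' to the other two, whereas in $\O3b$ all three crossings are positive. The idea is to reverse the effective direction of travel along one strand by sandwiching the $\O3$ region between two oppositely-oriented kinks that are created and annihilated using the $\O2c$ and $\O2d$ moves (which, by Lemma \ref{lem:O2cd}, are already available, but here we just take them as given).

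Concretely, I would proceed as follows. First, fix the $\O3b$ configuration and pick the strand $s$ whose orientation must be ``flipped'' to convert the local picture into an $\O3a$ picture. Using an $\O2c$ (or $\O2d$, depending on the crossing signs) move, push a bigon onto $s$ near one side of the triangular region; this introduces two new crossings of $s$ with one of the other strands but, crucially, changes the way the triangle looks so that an $\O3a$ move now applies. Apply $\O3a$ to slide the third strand across the crossing. Then cancel the bigon with the inverse $\O2c$ (resp. $\O2d$) move on the other side. The net effect on the boundary of the changing disk is exactly that of $\O3b$, while every intermediate step used only $\O2c$, $\O2d$, or $\O3a$. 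In the paper this is presented as a single figure (``lemma2.eps'' or similar) with the moves annotated along the arrows, and the proof body is just that figure; I would produce such a figure, making sure the boundary data matches at the start and end.

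The main obstacle is purely combinatorial bookkeeping: one must check that the orientations and over/under information at every crossing are consistent at each stage, so that the move invoked really is $\O2c$, $\O2d$, or $\O3a$ (and not, say, $\O2a$ or $\O2b$ or a different $\O3$ move), and that the two bigon moves are genuinely inverse to each other with the orientations as drawn. A secondary subtlety is that there are several versions of $\O3b$ up to the symmetry of the picture, and one should make sure the chosen sequence handles the specific representative fixed by Figure \ref{fig:OrientedSet3}; the other representatives then follow either by the same argument with the roles of the strands permuted, or are not needed because the paper's conventions single out one diagram per move. I expect the argument to go through cleanly once the right strand to ``flip'' is identified, which is why the whole proof collapses to one well-chosen figure.
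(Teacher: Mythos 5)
Your plan has the same outline as the paper's proof: a three-step detour in which a bigon is created by a type two move, $\O3a$ is applied, and a bigon is removed by another type two move. However, as written the proposal stops exactly where the content of the lemma begins. The whole point of this statement (and of the paper as a whole, cf.\ Section \ref{sec:othersets}) is that the orientation types of the auxiliary moves matter: a ``push a bigon onto $s$'' performed between two coherently oriented arcs would be an $\O2a$ or $\O2b$ move, which is not allowed here, and the assertion that after the push ``an $\O3a$ move now applies'' is precisely what must be exhibited, since $\O3a$ has a specific cyclic orientation pattern. You acknowledge this bookkeeping as ``the main obstacle'' but do not carry it out, and nothing in the proposal pins down which strand is pushed, across which other strand, with which over/under pattern, so that the two bigon moves are really of types $\O2c$ and $\O2d$. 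Since the paper's proof \emph{is} that verified sequence of pictures, the proposal does not yet establish the lemma.

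Two points of the guiding description are also off. First, no strand's direction can be ``reversed'': the endpoints and orientations on the boundary of the changing disk are fixed, and no kinks ($\O1$ curls) are involved or even permitted by the statement. What the auxiliary $\O2$ move actually does is reposition the moving strand so that $\O3a$ applies to a \emph{different} triangle of arcs; it does not alter any orientation. Second, the creating and cancelling bigon moves are not ``the inverse $\O2c$ (resp.\ $\O2d$)'' of one another: cancelling the very bigon you created would undo the move entirely. In the correct sequence the first bigon pairs the moving strand with one of the two remaining strands and the final bigon pairs it with the other; these two local pictures are mirror images of each other, hence one is an $\O2c$ and the other an $\O2d$ move -- which is exactly why the statement lists both. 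Finally, the worry about ``several versions of $\O3b$ up to symmetry'' is moot: $\O3b$ is the single oriented move fixed by Figure \ref{fig:OrientedSet3} (the unique $\O3$ move with all three crossings positive), so there is nothing further to handle once the explicit sequence for it is drawn and checked.
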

\begin{proof}
$$\centerline{\includegraphics[width=5in]{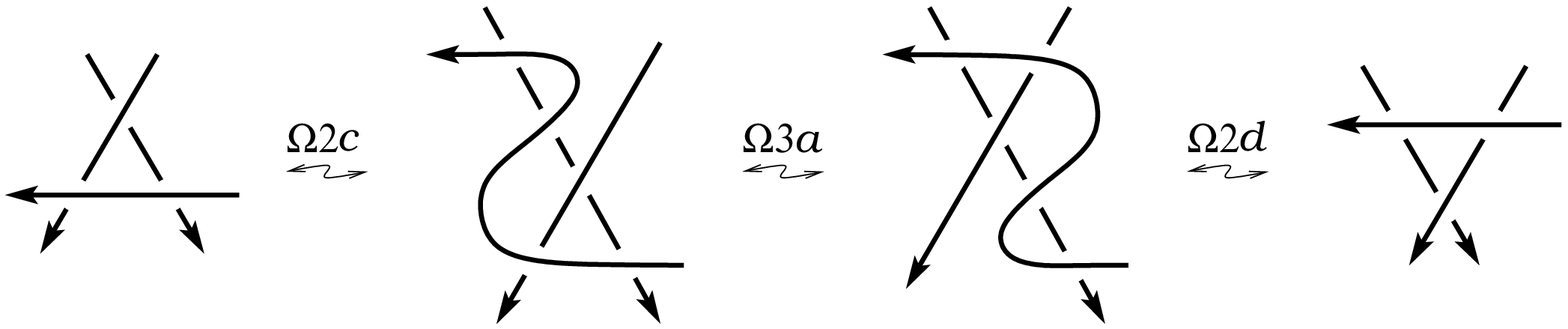}}$$
\end{proof}

To deal with $\O2b$ we will need another move of type three:
\begin{lem}
The move $\O3c$ may be realized by a sequence of $\O2c$, $\O2d$, and
$\O3a$ moves.
\end{lem}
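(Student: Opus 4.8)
The plan is to realize $\O3c$ by the same strategy that worked for $\O3b$ in Lemma \ref{lem:O3b}: use the two $\O2$ moves $\O2c$ and $\O2d$ to introduce an auxiliary pair of crossings near the configuration, apply a single $\O3a$ move (the only $\O3$ generator available), and then remove the auxiliary crossings with $\O2c$ and $\O2d$ again. The first step is to take the initial arc diagram $R$ of the $\O3c$ move inside the changing disk $C$ and identify a strand that can be pushed across one of the three crossings; by a suitable $\O2c$ (or $\O2d$, depending on the orientations of the strands in $\O3c$) move I create a small bigon which supplies an extra strand in exactly the position needed so that the local picture matches the left-hand side of $\O3a$.

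Next I would apply the $\O3a$ move to this modified diagram, sliding the appropriate strand through the triple point region. After the $\O3a$ move the diagram should look like the right-hand side of $\O3c$ but still carrying the auxiliary bigon; I then cancel that bigon with the inverse of the $\O2$ move used in the first step (recall the paper makes no distinction between a move and its inverse, so $\O2c$ and $\O2d$ are available in both directions). The net effect of $\O2c$, $\O3a$, $\O2d$ (in the correct order and with the correct choices) is precisely the $\O3c$ move, and the whole argument is most cleanly presented as a strip of diagrams, exactly as in the proof of Lemma \ref{lem:O3b}.

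The one genuine subtlety — and the step I expect to be the main obstacle — is checking that the orientations work out: $\O3c$ is one specific oriented version of the type-three move, and I must verify that the crossings created by the $\O2c$/$\O2d$ bigons carry signs and strand-orientations compatible with the single oriented $\O3a$ move, rather than requiring some other oriented $\O3$ move. This is a finite check, but it is the place where a careless choice (as in the erroneous arguments cited in the introduction) would break the proof, so the figure must display the orientations on every strand at every stage. Since all the moves involved are local and take place inside the changing disk of $\O3c$, no global realizability issues arise, and once the oriented bookkeeping is verified the lemma follows immediately. I would therefore present the proof simply as:
\begin{proof}
$$\centerline{\includegraphics[width=5in]{omega3c.eps}}$$
\end{proof}
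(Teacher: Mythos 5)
Your high-level plan is the same detour strategy the paper uses (and uses again for the analogous Lemma \ref{lem:O3b}): create an auxiliary bigon with one of $\O2c$, $\O2d$, pass the relevant strand through the triangle with the single available move $\O3a$, and cancel the bigon with the other (or inverse) $\O2$ move. However, as written your proposal is a plan rather than a proof: the entire content of the lemma is the explicit oriented sequence of diagrams --- which strand is detoured, which of $\O2c$/$\O2d$ is applied and in which direction, and the verification that the intermediate triangle is exactly the left-hand side of $\O3a$ with matching strand orientations and crossing signs. You yourself identify this orientation bookkeeping as ``the main obstacle'' and then leave it unverified, ending with an \verb+\includegraphics+ of a figure you have not drawn. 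In the paper the proof \emph{is} that figure; without it (or an equivalent explicit move-by-move description), nothing has been established.

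This is not a pedantic complaint, because the point of this paper is precisely that such detour arguments do not automatically succeed for arbitrary oriented variants: Section \ref{sec:othersets} shows, for instance, that $\O2a$, $\O2b$ and $\O3b$ cannot generate $\O2c$ or $\O2d$ (Lemma \ref{lem:O2abO3b}), and the later moves $\O3d$--$\O3h$ require more machinery (Lemma \ref{lem:O3}) than just $\O2c$, $\O2d$, $\O3a$. So the assertion that ``the orientations work out'' for $\O3c$ is exactly the claim to be proved, not a routine check to be deferred. To complete the argument you must exhibit the chain of diagrams (with orientations on every strand and the over/under data at every crossing) connecting the two sides of $\O3c$ inside its changing disk, each consecutive pair differing by an isotopy, an $\O2c$, an $\O2d$, or an $\O3a$; once that strip of pictures is supplied, your write-up matches the paper's proof.
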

\begin{proof}
$$\centerline{\includegraphics[width=5in]{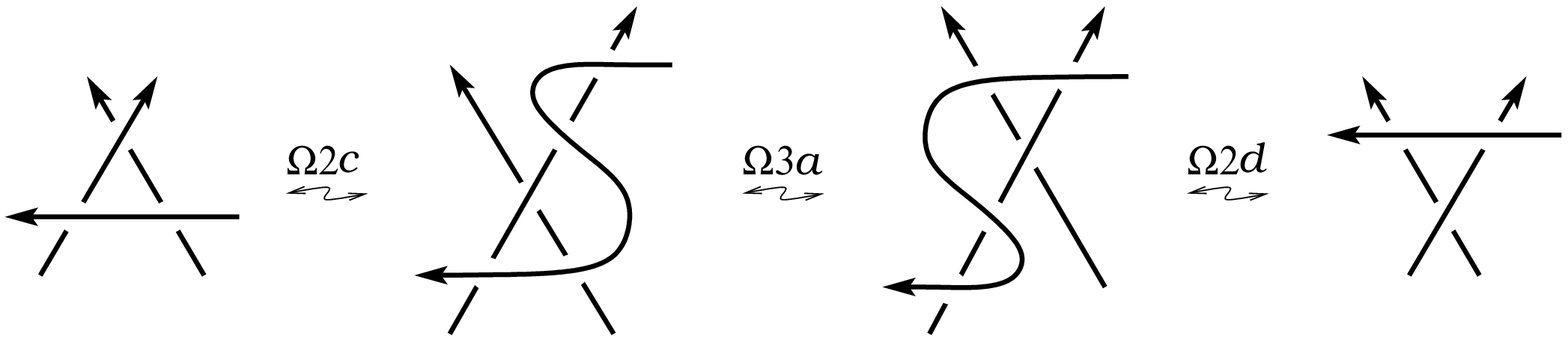}}$$
\end{proof}

At this stage we can obtain the remaining move $\O2b$ of type two:
\begin{lem}
The move $\O2b$ may be realized by a sequence of $\O1d$, $\O2c$ and
$\O3c$ moves.
\end{lem}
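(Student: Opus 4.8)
The plan is to give a picture proof in the style of Lemmas~\ref{lem:O2cd}--\ref{lem:O3b}: I will display a finite chain of diagram fragments inside the changing disk, all sharing the same boundary, such that consecutive fragments differ either by a planar isotopy or by one of the permitted moves $\O1d$, $\O2c$, $\O3c$ (recall that an inverse move is not distinguished from the move itself, so $\O1d$ performed backwards is also allowed). A crossing count already dictates the shape of such a chain: the two sides of $\O2b$ carry $0$ and $2$ crossings, an $\O1d$ changes the number of crossings by $1$, an $\O2c$ by $2$, and an $\O3c$ by $0$, so the shortest possible chain is $0\to 1\to 3\to 3\to 2$, realized by $\O1d$, then $\O2c$, then $\O3c$, then $\O1d$ run backwards.

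Concretely, I would build the $2$-crossing side of $\O2b$ --- a small bigon between two co-oriented strands, with a prescribed over-strand --- starting from two disjoint parallel strands. First apply $\O1d$ to one of the strands to create a small curl. A portion of this curl runs against the orientation of the rest of that strand, hence is anti-parallel to the second strand; using $\O2c$ I push that portion across the second strand, producing a small clasp of two new crossings sitting next to the curl. The three crossings now present --- the self-crossing of the curl together with the two clasp crossings --- are arranged exactly as one side of an $\O3c$ move, so a single $\O3c$ drags the curl's crossing through the clasp. After this the diagram contains a removable curl, and one backward $\O1d$ deletes it, leaving precisely the bigon of $\O2b$ with the correct strand on top. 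Reading the chain in the opposite direction realizes $\O2b$ by the allowed moves.

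The one delicate point, which I expect to be the real obstacle, is the orientation and sign bookkeeping: the side of the initial $\O1d$ curl, its over/undercrossing, and the over/undercrossing of the $\O2c$ clasp must be chosen so that (i) the clasp-plus-curl configuration is genuinely a copy of $\O3c$ and not one of the other seven type-three moves, and (ii) the bigon produced at the end is $\O2b$ rather than $\O2a$. Once these choices are pinned down the rest of the chain is forced, and verifying the type of each step reduces to inspecting the figure; the bulk of the work is therefore drawing the four or five frames cleanly, which I would present as a single figure as in the preceding lemmas.
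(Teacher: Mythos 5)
Your chain is exactly the paper's proof: its figure for this lemma shows the same four frames, creating an $\O1d$ kink on one strand, pushing the other strand across the kink's loop by $\O2c$ (possible since the loop runs antiparallel), sliding it past the kink's crossing by $\O3c$, and erasing the now-empty kink by $\O1d$ run backwards, which leaves the $\O2b$ bigon. The orientation and sign bookkeeping you flag does work out with these choices, so your proposal is correct and takes essentially the same route as the paper.
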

\begin{proof}
$$\centerline{\includegraphics[width=4.2in]{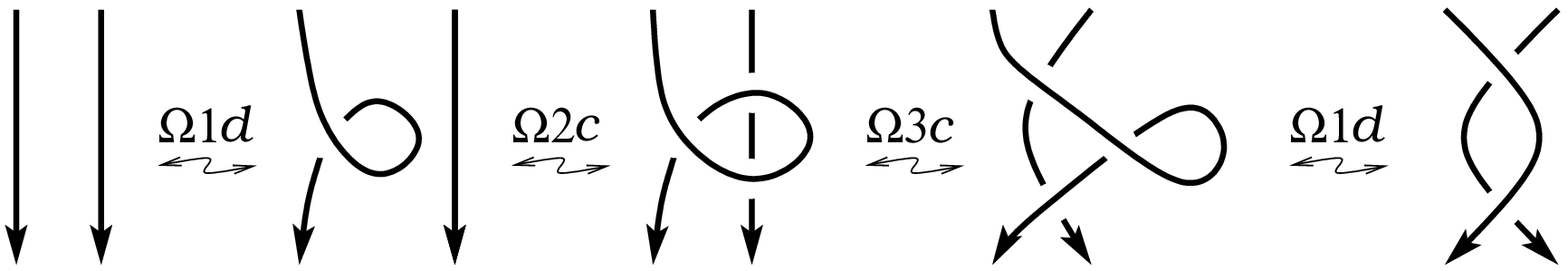}}$$
\end{proof}

To conclude the proof of Theorem \ref{thm:main}, it remains to
obtain $\O3d$ -- $\O3h$. Since by now we have in our disposal all
moves of type two, this becomes an easy exercise:
\begin{lem}\label{lem:O3}
The moves $\O3d$ -- $\O3h$ of type three may be realized by a
sequence of type two moves, $\O3a$, and $\O3b$.
\end{lem}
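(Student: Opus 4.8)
The plan is to treat the remaining five $\O3$ moves one at a time, in each case reducing to the already-available cyclic move $\O3a$ or the braid-like move $\O3b$ by ``paying'' with $\O2$ moves. The standard trick is the following: given an $\O3$ move whose three strands have some fixed orientation pattern, one of the three strands $s$ may be oriented ``the wrong way'' relative to $\O3a$ or $\O3b$. To fix this, locally reverse the apparent orientation of $s$ inside a slightly enlarged changing disk by inserting a pair of canceling kinks/curls or, more precisely, by pushing a ``fold'' of $s$ (created by an $\O2$ move of the appropriate type — $\O2a$, $\O2b$, $\O2c$, or $\O2d$, all of which are now available) across the crossing region, applying $\O3a$ or $\O3b$ to the now-correctly-oriented configuration, and then retracting the fold with the inverse $\O2$ move. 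Since all four versions of $\O2$ are in hand, whatever fold orientation is needed can be produced. Each of $\O3d$–$\O3h$ differs from $\O3a$ or $\O3b$ by reversing one (or two) of the three strands, so one (or two) applications of this folding procedure suffices in each case.

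First I would recall that the eight oriented $\O3$ moves are classified by the orientations of the three strands (up to the symmetries of the picture), and that $\O3a$ (cyclic) and $\O3b$ (all crossings positive, the braid relation) are already available by hypothesis. Then, for each of $\O3d$, $\O3e$, $\O3f$, $\O3g$, $\O3h$, I would draw the explicit sequence: enlarge the changing disk, use an $\O2$ move to introduce a bigon on the strand that needs its orientation reversed, slide the resulting arc so that the triple point now matches the pattern of $\O3a$ or $\O3b$, apply that move, then remove the bigon with the inverse $\O2$ move. The bookkeeping of which $\O2$ variant to use is determined by the sign of the crossing being created and the direction of the strand, and in every case one of $\O2a$–$\O2d$ works.

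The main (and only real) obstacle is purely organizational rather than conceptual: one must verify that for each of the five target moves the folding procedure genuinely produces the correct over/under information at the triple point after the $\O2$ slide, i.e. that the enlarged-disk manipulation does not accidentally change a crossing from over to under. This is where a careful figure is essential, and it is why the lemma is best proved by exhibiting the five sequences diagrammatically, exactly as the earlier lemmas in this section were. I expect no difficulty beyond drawing these five pictures; since ``by now we have at our disposal all moves of type two,'' the reduction of each of $\O3d$–$\O3h$ to $\O3a$ or $\O3b$ plus a pair of type two moves is the routine exercise the text promises.
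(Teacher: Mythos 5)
Your plan follows essentially the same route as the paper: each of $\O3d$--$\O3h$ is realized by conjugating $\O3a$ or $\O3b$ with type two moves (all four of which are indeed available at this stage), which is exactly what the paper's proof does via five explicit diagrammatic sequences, chaining $\O3f$ to obtain $\O3g$ and $\O3g$ to obtain $\O3h$. The only shortfall is that you defer the five explicit pictures---which in the paper constitute the entire proof---and your description of ``locally reversing the apparent orientation'' of a strand is a heuristic for what the $\O2$-detour across the crossing of the other two strands actually accomplishes; the strategy itself is correct.
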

\begin{proof}
We realize moves $\O3d$-$\O3h$ as shown in rows 1-5 of the figure
below, using $\O3f$ to get $\O3g$, and $\O3g$ to get $\O3h$:
$$\centerline{\includegraphics[width=5in]{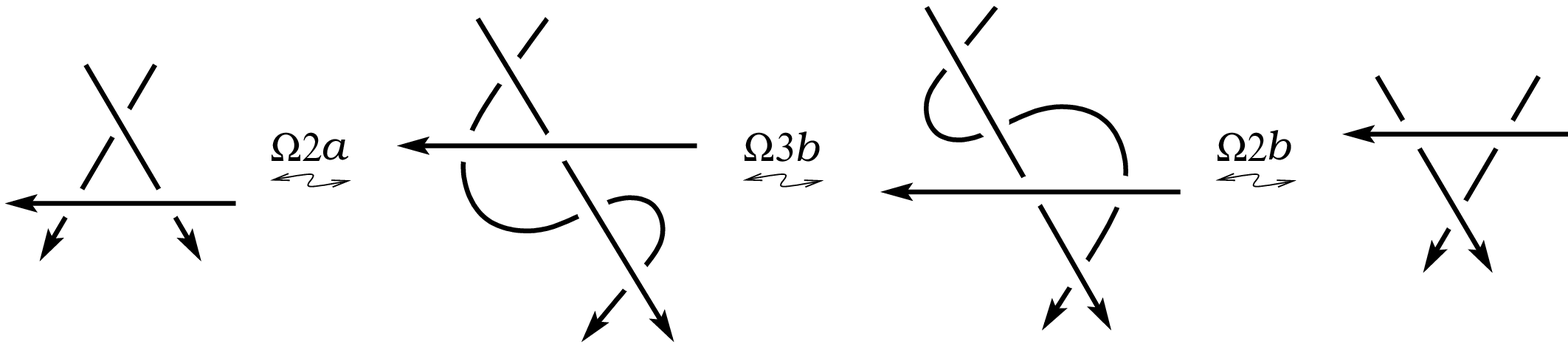}}$$
\vspace{-0.11in} \vspace{-0.11in}
$$\centerline{\includegraphics[width=5in]{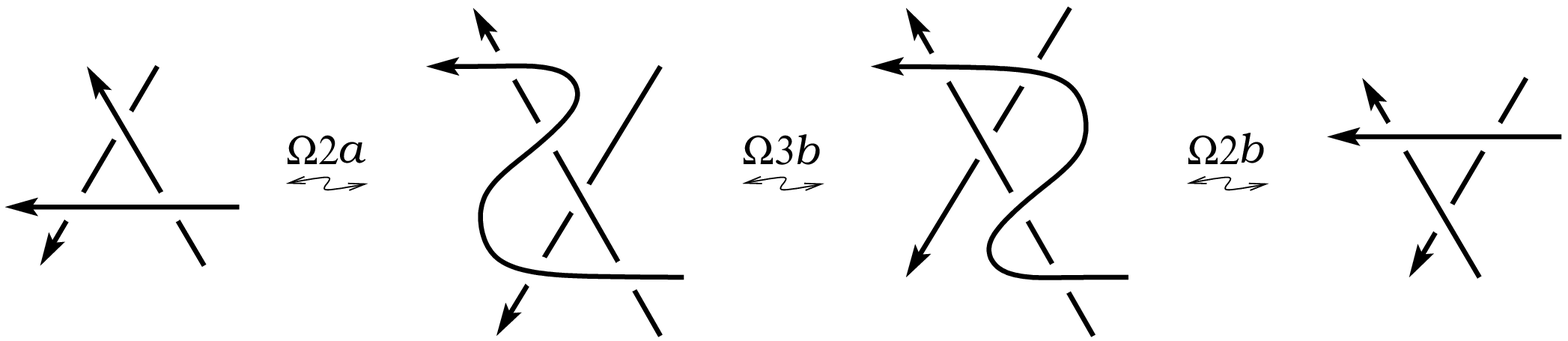}}$$
\vspace{-0.11in}
$$\centerline{\includegraphics[width=5in]{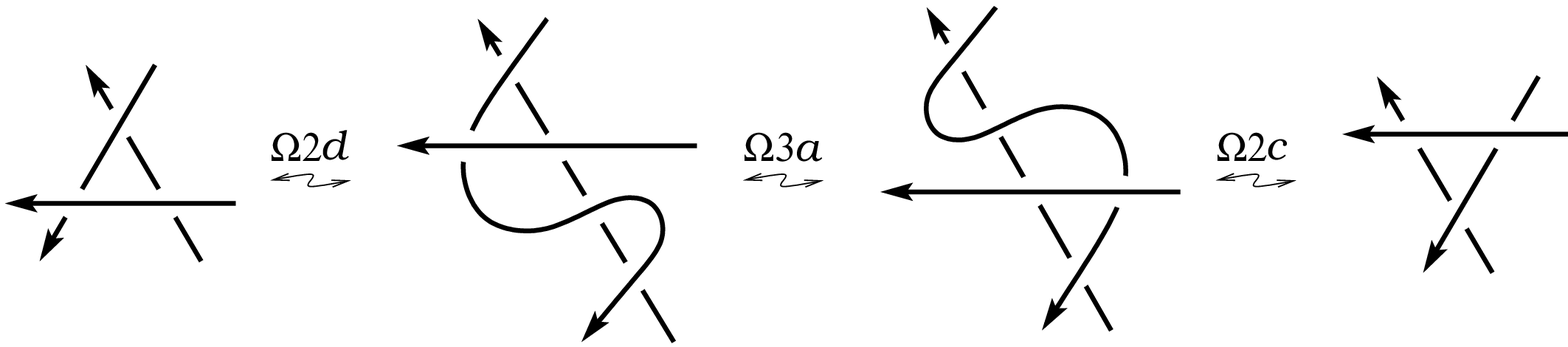}}$$
\vspace{-0.11in}
$$\centerline{\includegraphics[width=5in]{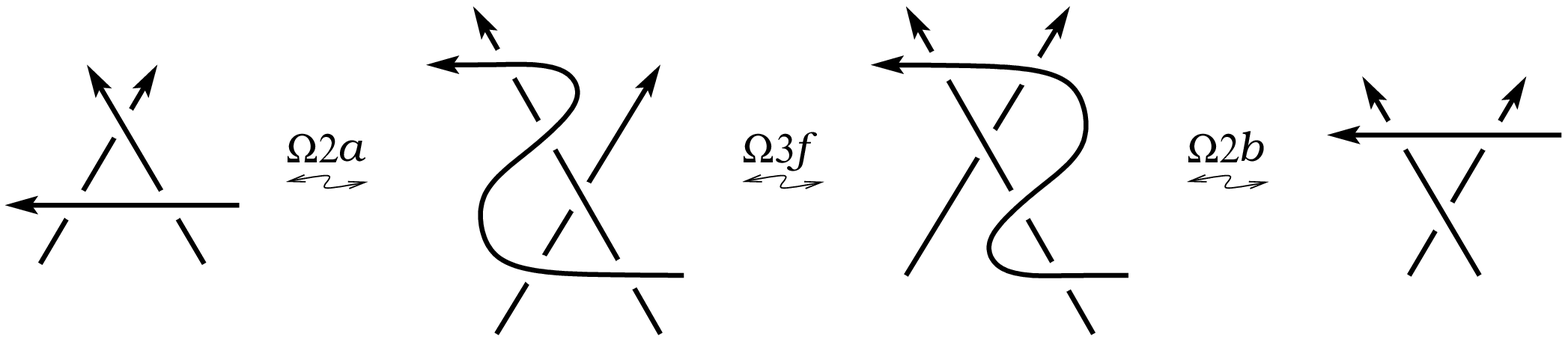}}$$
\vspace{-0.11in}
$$\centerline{\includegraphics[width=5in]{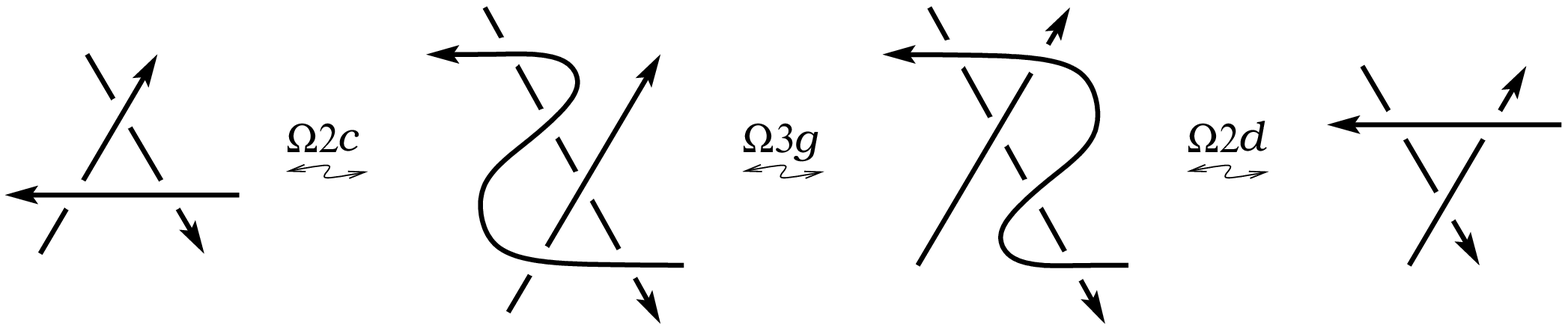}}$$
\vspace{-0.11in}
\end{proof}

\begin{rem}
There are other generating sets which include $\O3a$. In particular,
$\O1a$, $\O1b$, $\O2b$ and $\O3a$ also give a generating set. To
adapt the proof of Theorem \ref{thm:main} to this case, one needs
only a slight modification of Lemma \ref{lem:O2cd}. All other lemmas
do not change.
\end{rem}

\section{Other sets of Reidemeister moves} \label{sec:othersets}

In this section we discuss other generating sets and prove Theorem
\ref{thm:othersets}. Unexpectedly, different $\O3$ moves have
different properties as far as generating sets of Reidemeister moves
are concerned. Let us study the case of $\O3b$ in more details, due
to its importance for braid theory.

In a striking contrast to Theorem \ref{thm:main} which involves
$\O3a$, Theorem \ref{thm:othersets} implies that there does not
exist a generating set of four moves which includes $\O3b$. It is
natural to ask where does the proof in Section \ref{sec:main}
breaks down, if we attempt to replace $\O3a$ with $\O3b$.

The only difference between $\O3a$ and $\O3b$ may be pinpointed to
Lemma \ref{lem:O2cd}: it does not have an analogue with $\O3b$
replacing $\O3a$, as we will see in the proof of Lemma
\ref{lem:O2abO3b} below.

An analogue of Lemma \ref{lem:O3b} is readily shown to exist.
Indeed, $\O3a$ may be realized by a sequence of $\O2c$, $\O2d$ and
$\O3b$ moves, as illustrated below:
$$\centerline{\includegraphics[width=5in]{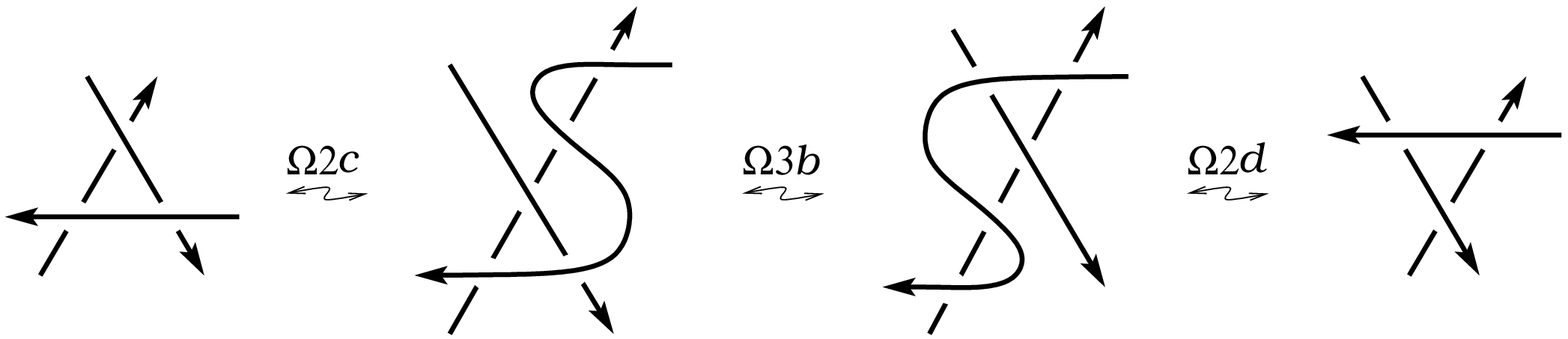}}$$
Using this fact instead of Lemma \ref{lem:O3b}, together with the
rest of Lemmas \ref{lem:O1}-\ref{lem:O3}, implies that $\O1a$ and
$\O1b$, taken together with $\O2c$, $\O2d$, and $\O3b$, indeed
provide a generating set. Moreover, a slight modification of Lemma
\ref{lem:O1} shows that any of the other three pairs of $\O1$ moves
in the statement of Theorem \ref{thm:othersets} may be used instead
of $\O1a$ and $\O1b$. Thus we see that all sets described in Theorem
\ref{thm:othersets} are indeed generating and obtain the ``if'' part
of the theorem. It remains to prove the ``only if'' part of Theorem
\ref{thm:othersets}, i.e., to show that other combinations of four
$\O1$ and $\O2$ moves, taken together with $\O3b$, do not result in
generating sets. We will proceed in three steps:

\begin{list}{}{\leftmargin=2.0cm\labelsep=0.5cm\labelwidth=1.6cm}
\item[\bf Step 1.] Prove that any such generating set should contain
at least two $\O1$ moves and to eliminate two remaining pairs
($\O1a$, $\O1d$) and ($\O1b$, $\O1c$) of $\O1$ moves.

\item[\bf Step 2.] Prove that any such generating set should contain
at least two $\O2$ moves and to eliminate pairs ($\O2a$,$\O2c$),
($\O2a$,$\O2d$), ($\O2b$,$\O2c$), and ($\O2b$,$\O2d$).

\item[\bf Step 3.] Eliminate the remaining pair ($\O2a$,$\O2b$).
\end{list}

The remainder of this section is dedicated to these three steps.
Step 1 is the simplest and is given by Lemma \ref{lem:Oe} below.
Step 2 is the most complicated; it is given by Corollaries
\ref{cor:O2acd} and \ref{cor:O2bcd}. Step 3 is relatively simple and
is given by Lemma \ref{lem:O2abO3b}.

To show that a certain set of Reidemeister moves is not generating,
we will construct an invariant of these moves which, however, is not
preserved under the set of all Reidemeister moves. The simplest
classical invariants of this type are the {\em writhe} $w$ and the
{\em winding number} $rot$ of the diagram. The winding number of the
diagram grows (respectively drops) by one under $\O1b$ and $\O1d$
(respectively $\O1a$ and $\O1c$). The writhe of the diagram grows
(respectively drops) by one under $\O1a$ and $\O1b$ (respectively
$\O1c$ and $\O1d$). Moves $\O2$ and $\O3$ do not change $w$ and
$rot$. These simple invariants suffice to deal with moves of type
one (see e.g. \cite{Oe}):

\begin{lem}[\cite{Oe}]\label{lem:Oe}
Any generating set of Reidemeister moves contains at least two $\O1$
moves. None of the two pairs ($\O1a$, $\O1d$) or ($\O1b$, $\O1c$),
taken together with all $\O2$ and $\O3$ moves, gives a generating
set.
\end{lem}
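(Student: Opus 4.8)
The plan is to use the two invariants already introduced—the writhe $w$ and the winding number $rot$—to obstruct the moves in question. Recall that $\O2$ and $\O3$ moves leave both $w$ and $rot$ unchanged, so the only moves that affect these quantities are the four $\O1$ moves, and they do so as recorded in the excerpt: $\O1a$ has $(\Delta w,\Delta rot)=(+1,-1)$, $\O1b$ has $(+1,+1)$, $\O1c$ has $(-1,-1)$, and $\O1d$ has $(-1,+1)$ (with signs reversed for the inverse moves, but recall the convention that a move and its inverse are identified, so effectively each $\O1$ move contributes a $\pm$ of its vector).

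First I would establish the lower bound of two $\O1$ moves. Consider the pair $(w,rot)\in\Z^2$. A single $\O1$ move (together with its inverse) generates only the subgroup of $\Z^2$ spanned by one of the four vectors $(\pm1,\mp1)$ or $(\pm1,\pm1)$; since no one of these vectors spans all of $\Z^2$, while $\O1c$ and $\O1d$ themselves realize vectors $(-1,-1)$ and $(-1,+1)$ that are linearly independent, a generating set must be able to reach both, hence it cannot contain only one $\O1$ move. More precisely, if $S$ contains at most one $\O1$ move, then every diagram reachable from a fixed $D$ via $S$ has its $(w,rot)$ lying in a single coset-union of a rank-$\le 1$ subgroup, so $S$ cannot generate, e.g., the move $\O1d$ starting from a diagram obtained by applying $\O1a$.

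Next I would rule out the two mixed pairs $(\O1a,\O1d)$ and $(\O1b,\O1c)$. The vectors for $\O1a$ and $\O1d$ are $(+1,-1)$ and $(-1,+1)$, which are negatives of each other; together with all their inverses they span only the rank-$1$ subgroup $\{(k,-k):k\in\Z\}$. Since this subgroup does not contain the vector $(+1,+1)$ realized by $\O1b$, the set $\{\O1a,\O1d\}$ together with all $\O2$ and $\O3$ moves (which contribute nothing to $(w,rot)$) cannot produce $\O1b$ from a suitable starting diagram: pick any diagram $D$, let $D'$ be the result of a single $\O1b$ move on $D$, then $(w,rot)(D')-(w,rot)(D)=(1,1)\notin\{(k,-k)\}$, so no sequence of the allowed moves converts $D$ to $D'$. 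The identical argument with the subgroup $\{(k,-k)\}$ (again, since $\O1b$ and $\O1c$ have vectors $(+1,+1)$ and $(-1,-1)$) eliminates the pair $(\O1b,\O1c)$, using the move $\O1a$ as the target.

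The main (and really only) subtlety is bookkeeping the sign convention: because the paper identifies each move with its inverse, a "set containing $\O1a$" can change $(w,rot)$ by $(+1,-1)$ \emph{or} $(-1,+1)$, so what matters is the subgroup of $\Z^2$ generated by the relevant vectors, not the sub-semigroup. Once this is set up correctly the three claims are immediate, since in each bad case the generated subgroup has rank $1$ while the targeted $\O1$ move lies outside it. I would present this as a single short argument: define $\Phi(D)=(w(D),rot(D))\in\Z^2$; note $\Phi$ is invariant under $\O2$ and $\O3$ and changes by one of $\pm(1,-1)$, $\pm(1,1)$ under the four $\O1$ moves; observe that each of $\{\O1a\}$, $\{\O1b\}$, $\{\O1c\}$, $\{\O1d\}$, $\{\O1a,\O1d\}$, $\{\O1b,\O1c\}$ moves $\Phi$ only within a proper subgroup $\Lambda\subsetneq\Z^2$; and finally exhibit, for each such case, a pair of diagrams $D,D'$ with $\Phi(D')-\Phi(D)\notin\Lambda$ but representing the same link (obtained from one another by a single forbidden $\O1$ move). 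This completes Step 1.
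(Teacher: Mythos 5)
Your proposal is correct in substance and is essentially the paper's own argument: the paper observes that $\O1a$ and $\O1d$ (together with all $\O2$ and $\O3$ moves) preserve $w+rot$, handling that pair and the singletons, and disposes of ($\O1b$, $\O1c$) by orientation reversal; your formulation with $\Phi=(w,rot)\in\mathbb{Z}^2$ and rank-one subgroups repackages exactly the same two invariants. Two details in your write-up need correcting, though neither breaks the overall argument. First, your illustrative example in the one-move case is false: if the single $\O1$ move is $\O1a$, the effect of $\O1d$ is $(-1,+1)=-(+1,-1)$, which \emph{does} lie in the subgroup generated by the $\O1a$ vector, so this invariant cannot obstruct it --- and indeed $\O1d$ is generated by $\O1a$ together with $\O2c$ (Lemma \ref{lem:O1}); the target move must be chosen with vector outside the rank-one subgroup (against $\O1a$ or $\O1d$ use $\O1b$ or $\O1c$, and vice versa), which your earlier sentence about the independence of the $\O1c$ and $\O1d$ vectors already accomplishes. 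Second, for the pair ($\O1b$, $\O1c$) the relevant subgroup is $\{(k,k)\}$, i.e.\ the kernel of $w-rot$, not $\{(k,-k)\}$; with that correction the target $\O1a$ works as you say.
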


\begin{proof}
Indeed, both $\O1a$ and $\O1d$ preserve $w+rot$, so this pair (or
any of them separately) together with $\O2$ and $\O3$ moves cannot
generate all Reidemeister moves. The case of $\O1b$ and $\O1c$ is
obtained by the reversal of an orientation (of all components) of
the link.
\end{proof}

This concludes Step 1 of the proof. Let us proceed with Step 2.
 Here the situation is quite delicate, since the standard
algebraic/topological invariants, reasonably well behaved under
compositions, can not be applied. The reason can be explained on a
simple example: suppose that we want to show that $\O2d$ cannot be
obtained by a sequence of Reidemeister moves which includes $\O2c$.
Then our invariant should be preserved under $\O2c$ and distinguish
two tangles shown in Figure \ref{fig:composition}a. However, if we
compose them with a crossing, as shown in Figure
\ref{fig:composition}b, we may pass from one to another by $\O2c$.
Thus the invariant should not survive composition of tangles.

\begin{figure}[htb]
\centerline{\includegraphics[height=1.1in]{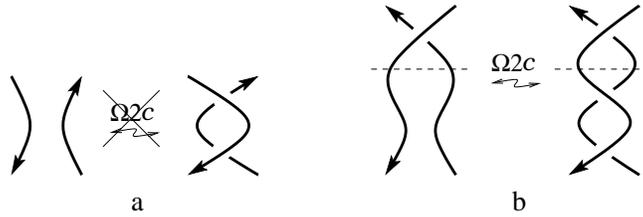}}
\caption{\label{fig:composition} Composition destroys inequivalence}
\end{figure}

Instead, we will use a certain notion of positivity, which is indeed
destroyed by such compositions. It is defined as follows. Let $D$ be
a $(2,2)$-tangle diagram with two oriented ordered components $D_1$,
$D_2$. Decorate all arcs of both components of $D$ with an integer
weight by the following rule. Start walking on $D_1$ along the
orientation. Assign zero to the initial arc. Each time when we pass
an overcrossing (we don't count undercrossings) with $D_2$, we add a
sign (the local writhe) of this overcrossing to the weight of the
previous arc. Now, start walking on $D_2$ along the orientation.
Again, assign zero to the initial arc. Each time when we pass an
undercrossing (now we don't count overcrossings) with $D_1$, we add
a sign of this undercrossing to the weight of the previous arc. See
Figure \ref{fig:weights1}a. Two simple examples are shown in Figure
\ref{fig:weights1}b,c.

\begin{figure}[htb]
\centerline{\includegraphics[width=5.0in]{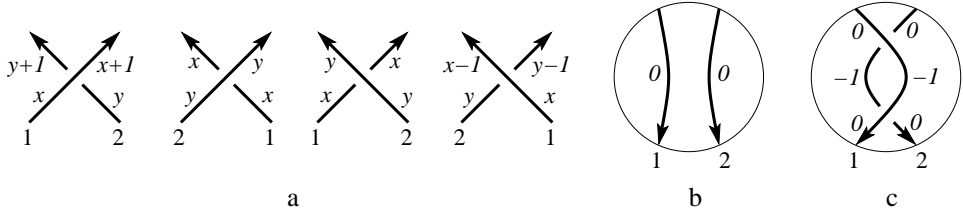}}
\caption{\label{fig:weights1} Weights of diagrams}
\end{figure}

We call a component {\em positively weighted}, if weights of all its
arcs are non-negative. E.g., both components of the (trivial) tangle
in Figure \ref{fig:weights1}b are positively weighted. None of the
components of a diagram in Figure \ref{fig:weights1}c are positively
weighted (since the weights of the middle arcs on both components are
$-1$). Behavior of positivity under Reidemeister moves is considered
in the next lemmas.

Denote by $S_b$ the set which consists of all $\O1$ moves and
$\O3b$.

\begin{lem}
Let $D$ be a $(2,2)$-tangle diagram with both positively weighted
components. Then both components of a diagram obtained from it by a
sequence of moves which belong to $S_b\cup\O2a$ are also positively
weighted.
\end{lem}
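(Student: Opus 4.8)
The statement is an invariance claim: positivity of the weighting of both components of a $(2,2)$-tangle is preserved under moves from $S_b \cup \{\O2a\}$. The natural strategy is a move-by-move check. Since the weight of an arc is defined by a running sum along each component (counting over-crossings of $D_1$ with $D_2$, resp. under-crossings of $D_2$ with $D_1$), I would first record the elementary observation that \emph{only crossings between the two distinct components $D_1$ and $D_2$ contribute to the weights}; self-crossings of a single component are invisible to the weighting. This immediately disposes of the $\O1$ moves, which are entirely local to one component and involve a self-crossing — they change neither the set of arcs-as-seen-by-the-weighting nor any contributing crossing, so all weights are literally unchanged.

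For $\O2a$ and $\O3b$ the analysis splits according to how many of the two strands in the changing disk belong to $D_1$ versus $D_2$. If both strands in the local picture lie on the same component, the move involves only self-crossings and, as above, weights are unchanged. The substantive cases are when the strands belong to different components. For $\O2a$: the two new crossings created (or destroyed) have opposite signs, and one is an over-crossing of the $D_1$-strand while the other is an under-crossing; one must check that (i) on $D_1$ the contribution from the new over-crossing is cancelled — or rather, that the weight increments telescope so that only a bounded, middle arc is affected, and (ii) the sign is such that the affected arc's weight goes \emph{up}, not down, relative to its neighbors. Here the specific orientations of $\O2a$ (as fixed in Figure~\ref{fig:OrientedSet2}) are what make positivity survive; I would draw the local picture with the induced arc-weights written on it and read off that the new middle arc receives weight (neighbor's weight) $+1 \ge 0$ on the relevant component, while on the other component the move passes under without creating a counted crossing. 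The $\O3b$ case is similar but with three strands: again reduce to the case where not all three strands are on one component, note that $\O3b$ is the all-positive-crossings version, track how the three crossings redistribute among the arcs, and verify that the rearrangement of a triangle of crossings only permutes and translates weights without ever forcing a previously-nonnegative arc negative.

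**Main obstacle.** The crux is the $\O3b$ computation in the mixed cases (two strands on one component, one on the other — and its sub-cases depending on which strand is over/under and the relative orientations). Here the weights on several arcs change simultaneously, and one must check that the combination of increments never produces a negative value given that the input diagram was positively weighted. The key point I expect to need is that $\O3b$ has \emph{all three crossings positive}: this ensures every counted increment is $+1$ rather than $-1$, so the only danger would be an arc \emph{losing} a count. One then has to verify that whenever the move removes a counted over-crossing (or under-crossing) from an arc on the relevant component, the bookkeeping along that component compensates — intuitively because an over-crossing of $D_1$ over $D_2$ that disappears on one side of the triangle reappears on the other. Making this precise is just a careful case-by-case inspection of the local pictures with weights marked, but it is where all the actual content of the lemma sits; the $\O1$ and $\O2a$ cases, and the all-on-one-component reductions, are essentially immediate.
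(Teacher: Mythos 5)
Your proposal is correct and follows essentially the same route as the paper: a move-by-move check in which $\O1$ moves are invisible because only crossings between the two different components are counted, $\O2a$ can only raise the weights of the middle arcs because the positive crossing is met before the negative one along the orientation, and $\O3b$ is harmless because all three of its crossings are positive, so no arc inside the changing disk can fall below the weight of the arc entering it. One small correction to your description of $\O2a$: in any $\O2$ move one strand passes over the other at both crossings, so when the overstrand lies on $D_1$ and the understrand on $D_2$ \emph{both} components acquire counted crossings (over-crossings on $D_1$, under-crossings on $D_2$), each middle arc getting the neighboring weight plus one — your reading that only one component is affected is slightly off, but the positivity conclusion is the same.
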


\begin{proof}
Indeed, an application of a first Reidemeister move does not change
this property since we count only intersections of two different
components. An application of $\O2a$ adds (or removes) two crossings
on each component in such a way, that walking along a component we
first meet a positive crossing and then the negative one, so the
weights of the middle arcs are either the same or larger than on the
surrounding arcs, see Figure \ref{fig:weights2}a. An application of
$\O3b$ preserves the weights since $\O3b$ involves only positive
crossings.
\end{proof}

\begin{figure}[htb]
\centerline{\includegraphics[width=4.7in]{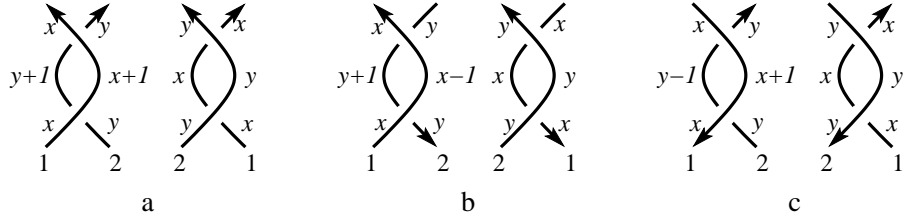}}
\caption{\label{fig:weights2} Weights and Reidemeister moves of type two}
\end{figure}

\begin{lem}
Let $D$ be a $(2,2)$-tangle diagram with a positively weighted
second component. Then any diagram obtained from it by $\O2c$ also
has a positively weighted second component.
\end{lem}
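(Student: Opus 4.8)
The plan is to examine how a single $\O2c$ move changes the weights on $D_2$, and then to conclude for an arbitrary sequence by induction; since a move and its inverse are not distinguished, both directions of $\O2c$ must be handled. The key remark is that the weight of an arc of $D_2$ is the sum of the signs of the undercrossings of $D_2$ with $D_1$ that precede it along the orientation of $D_2$. Thus only the crossings at which $D_2$ runs \emph{under} $D_1$, together with the order in which they occur, are relevant.

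First I would dispose of the easy cases. If the two strands in the local picture of the move both belong to $D_1$, or both to $D_2$, then the move creates or destroys no crossing between the two components, so the affected arcs of $D_2$ are simply subdivided into pieces all carrying the original weight (overcrossings of $D_2$, and crossings of $D_2$ with itself, never contribute). The same holds when one strand lies on $D_1$ and the other on $D_2$ but $D_2$ runs over $D_1$ at the two crossings involved. In all these situations the weights on $D_2$ are unchanged, and positivity is trivially preserved.

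The substantive case is when the move creates (or destroys) two crossings at which $D_2$ runs under $D_1$, i.e., when the arc of $D_2$ is the under-strand of the local $\O2c$ picture. Take the direction of the move that creates the two crossings, and let $\alpha$ be the arc of $D_2$ where the move takes place; afterwards $\alpha$ is split into three consecutive arcs $\alpha_1,\mu,\alpha_2$ of $D_2$, with $\mu$ lying between the two new crossings. Reading off the orientations in the definition of $\O2c$ (see Figure \ref{fig:weights2}), the first of these crossings met while travelling along $D_2$ is \emph{positive} and the second \emph{negative}; this is exactly the feature distinguishing $\O2c$ from $\O2d$, where the order of the two signs along the under-strand is reversed. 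Consequently $\alpha_1$ and $\alpha_2$ retain the weight of $\alpha$, the middle arc $\mu$ gets weight one larger, and no other arc of $D_2$ changes. Since $\alpha$ had non-negative weight, so do $\alpha_1,\mu,\alpha_2$, and $D_2$ remains positively weighted. The inverse move merely fuses $\alpha_1,\mu,\alpha_2$ back into a single arc whose weight equals that of $\alpha_1$ and is hence non-negative, so it preserves positivity too.

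I expect the only real difficulty to be this last piece of orientation bookkeeping: checking that for $\O2c$ (rather than $\O2d$) it is the \emph{first} of the two new undercrossings along $D_2$ that is positive. This asymmetry is precisely what will force a generating set to contain both $\O2c$ and $\O2d$ — the mirror computation shows that $\O2d$ lowers the middle weight by one and can thus destroy positivity of $D_2$, while instead preserving positivity of $D_1$.
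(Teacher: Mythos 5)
Your proposal is correct and follows essentially the same route as the paper: the paper's proof is the one-line observation (supported by Figure \ref{fig:weights2}b) that $\O2c$ may add or remove two undercrossings on $D_2$, with the positive one met first along the orientation, so only the middle arc's weight goes up and positivity is preserved. Your additional case analysis (both strands in one component, or $D_2$ passing over $D_1$) and the explicit treatment of the inverse move simply spell out details the paper leaves implicit in ``may add (or remove)''.
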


\begin{proof}
An application of $\O2c$ may add (or remove) two undercrossings
on $D_2$, but in such a way that we first meet a positive
undercrossing and then the negative one, so the weight of a middle
arc is larger than on the surrounding arcs, see Figure
\ref{fig:weights2}b.
\end{proof}

\begin{lem}
Let $D$ be a $(2,2)$-tangle diagram with a positively weighted first
component. Then any diagram obtained from it by $\O2d$ also has a
positively weighted first component.
\end{lem}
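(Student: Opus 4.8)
The plan is to mirror the argument just given for $\O2c$ and the second component. Recall from the definition of the weights that, as we walk along the first component $D_1$, the weight of an arc of $D_1$ changes only when we pass an overcrossing of $D_1$ over $D_2$. Consequently $\O2d$ can affect the weights of $D_1$ only when it is performed on one strand of $D_1$ and one strand of $D_2$ with the $D_1$-strand passing over; in every other case (the move takes place inside a single component, or the $D_2$-strand passes over) no overcrossing of $D_1$ over $D_2$ is created or destroyed, every new arc of $D_1$ inherits the weight of its unchanged neighbor, and positivity of $D_1$ is clear. So the whole lemma reduces to this one case.

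In that case, walking along $D_1$ through the changing disk we pass the two new crossings one immediately after the other, and reading off their local writhes from the oriented picture of $\O2d$ one sees that their signs occur in the order $+1$, then $-1$. If $k\ge 0$ denotes the weight of the arc of $D_1$ entering the disk, the new middle arc of $D_1$ then acquires weight $k+1$, the arc leaving the disk again acquires weight $k$, and no other arc weight is affected; since $k\ge 0$ gives $k+1\ge 0$, the first component of the new diagram is still positively weighted. Traversing the same picture in the opposite direction handles the deletion of a crossing pair. Concretely I would add this as one more panel of Figure~\ref{fig:weights2}, entirely parallel to the panel drawn for $\O2c$.

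The one genuine point — and the only obstacle, a mild one — is the order of the two signs along $D_1$: that the positive crossing is met first, so that the middle weight goes up rather than down by one. This is exactly the property that distinguishes $\O2d$ from $\O2c$: for $\O2c$ performed with the $D_1$-strand over, the two overcrossings of $D_1$ over $D_2$ are met along $D_1$ in the unfavorable order $-1$, then $+1$ (which is why the previous lemma claims nothing about the first component), and symmetrically for $\O2d$ it is the second component whose two counted crossings appear in the bad order. Checking the order is a routine inspection of the oriented $\O2d$ diagram, mirror to the corresponding check in the previous lemma, so I would not dwell on it.
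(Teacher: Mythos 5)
Your argument is correct and is essentially the paper's own proof: you observe that only the case where the $\O2d$ move puts a strand of $D_1$ over a strand of $D_2$ can change the counted crossings, and that along $D_1$ the positive overcrossing is met before the negative one, so the middle arc gets weight $k+1\ge 0$ and positivity of the first component is preserved (the paper does exactly this, citing Figure \ref{fig:weights2}c). Your explicit treatment of the uninteresting cases (same-component crossings, or the $D_2$-strand on top) and the contrast with $\O2c$ are just slightly more verbose versions of what the paper leaves implicit.
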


\begin{proof}
An application of $\O2d$ may add (or remove) two overcrossings on
$D_1$, but in such a way that we first meet a positive overcrossing
and then the negative one, so the weight of a middle arc is larger
than on the surrounding arcs, see Figure \ref{fig:weights2}c.
\end{proof}

Comparing  Figures \ref{fig:weights1}b and \ref{fig:weights1}c we
conclude
\begin{cor}\label{cor:O2acd}
None of the two sets $S_b\cup\O2a\cup\O2c$ and $S_b\cup\O2a\cup\O2d$
generates $\O2b$.
\end{cor}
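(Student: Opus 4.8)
The plan is to extract from the proofs of the three preceding lemmas a single invariant attached to each of the two sets, and then to separate the two sides of an $\O2b$ move by it.

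First I would note that the proofs of the three lemmas actually establish something move-by-move: every move in $S_b\cup\O2a$ preserves positivity of \emph{each} component of a $(2,2)$-tangle diagram on its own (a type one move leaves every weight unchanged, $\O2a$ only raises the weight of any middle arc it creates, and $\O3b$ preserves all weights), while $\O2c$ preserves positivity of the second component and $\O2d$ preserves positivity of the first. Consequently positivity of the second component is invariant under every move of $S_b\cup\O2a\cup\O2c$, and positivity of the first component is invariant under every move of $S_b\cup\O2a\cup\O2d$. Since in the definition of a generating set all moves realizing a given move are performed inside one fixed changing disk, the diagram stays a $(2,2)$-tangle with two ordered oriented components at every stage, so both of these invariants are meaningful throughout.

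Then I would apply $\O2b$ to the trivial $(2,2)$-tangle $D$ of Figure \ref{fig:weights1}b inside a changing disk; the result is the tangle $D'$ of Figure \ref{fig:weights1}c. Every arc of $D$ has weight $0$, so both components of $D$ are positively weighted, whereas the middle arc of each component of $D'$ has weight $-1$, so neither component of $D'$ is positively weighted. If $\O2b$ were a composition of isotopies and moves from $S_b\cup\O2a\cup\O2c$, carrying it out on $D$ would yield $D'$ with its second component still positively weighted, which is false; the argument with the first component rules out $S_b\cup\O2a\cup\O2d$ in the same way. The one point to keep in mind is the phenomenon of Figure \ref{fig:composition}: positivity is not stable under composing tangles, so it is essential that $D$ and $D'$ be compared as the complete contents of the changing disk rather than as pieces of larger diagrams --- but that is exactly what the definition of a generating set supplies, so this is not an actual obstacle, and the only computations left, the weights in Figure \ref{fig:weights1}b,c, are immediate.
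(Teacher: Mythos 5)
Your proposal is correct and follows essentially the same route as the paper: the three preceding lemmas give positivity of the second (resp.\ first) component as an invariant of $S_b\cup\O2a\cup\O2c$ (resp.\ $S_b\cup\O2a\cup\O2d$), and comparing the tangles of Figure \ref{fig:weights1}b,c, which differ by an $\O2b$ move inside the changing disk, separates the two sides. Your explicit remark on why the composition issue of Figure \ref{fig:composition} is harmless here is a nice touch, but the argument is the paper's own.
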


The reversal of orientations (of both components) of the tangle in
the above construction gives
\begin{cor}\label{cor:O2bcd}
None of the two sets $S_b\cup\O2b\cup\O2c$ and $S_b\cup\O2b\cup\O2d$
generates $\O2a$.
\end{cor}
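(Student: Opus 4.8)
The plan is to deduce Corollary \ref{cor:O2bcd} from Corollary \ref{cor:O2acd} by the symmetry announced just above, namely reversal of the orientations of both components of a $(2,2)$-tangle. Write $\rho$ for this operation. The first step is to record that $\rho$ is an involution of the set of all $(2,2)$-tangle diagrams and that it carries Reidemeister moves to Reidemeister moves: reversing both strands at a crossing preserves its sign, so $\rho$ merely permutes the oriented Reidemeister moves. I would then identify this permutation on the moves at hand. On $\O1$ moves it interchanges $\O1a\leftrightarrow\O1b$ and $\O1c\leftrightarrow\O1d$, as already used in Lemma \ref{lem:Oe}. On $\O2$ moves it interchanges the two parallel moves $\O2a\leftrightarrow\O2b$ (reversing both strands reverses the order in which the two new crossings are met along each component, so the bigon that is traversed ``positive then negative'' becomes the one traversed ``negative then positive''), and it interchanges the two antiparallel moves $\O2c\leftrightarrow\O2d$; in particular $\rho$ preserves the pair $\{\O2c,\O2d\}$. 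Finally $\rho$ fixes $\O3b$, since its image is again an $\O3$ move with three positive crossings and $\O3b$ is the only such move. Consequently $\rho(S_b)=S_b$, and $\rho$ carries the two sets $S_b\cup\O2a\cup\O2c$ and $S_b\cup\O2a\cup\O2d$ onto the two sets $S_b\cup\O2b\cup\O2c$ and $S_b\cup\O2b\cup\O2d$.

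The second step is to transport the proof of Corollary \ref{cor:O2acd} along $\rho$. That proof produces, for each of $S_b\cup\O2a\cup\O2c$ and $S_b\cup\O2a\cup\O2d$, a pair of $(2,2)$-tangle diagrams that are related by a single $\O2b$ move but cannot be connected by any finite sequence of moves from the set in question; the obstruction is a component that stays positively weighted under the set but not under $\O2b$. Reversing the orientations of both components of the tangle in that construction, i.e., applying $\rho$, yields a pair of diagrams related by $\rho(\O2b)=\O2a$. Since $\rho$ is a bijection of diagrams that conjugates the action of Reidemeister moves, this new pair cannot be connected by any sequence of moves from the image set, which is $S_b\cup\O2b\cup\O2d$ in the first case and $S_b\cup\O2b\cup\O2c$ in the second. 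Hence neither $S_b\cup\O2b\cup\O2c$ nor $S_b\cup\O2b\cup\O2d$ generates $\O2a$, which is the claim.

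The only genuinely non-formal ingredient is the bookkeeping in the first step: confirming that $\rho$ interchanges $\O2a$ with $\O2b$ rather than fixing each of them (this is exactly what makes the argument yield Corollary \ref{cor:O2bcd} and not merely Corollary \ref{cor:O2acd} again), and checking its effect on the remaining $\O1$ moves and on $\O3b$. One must also be careful that $\rho$ acts on the tangle of the construction by reversing \emph{both} of its components; reversing only one of them would flip the signs of the crossings between the two components and would not define a symmetry of Reidemeister moves. No new estimate is required: the three weight lemmas enter the argument only through Corollary \ref{cor:O2acd}, and everything else is the formal transport of that corollary along an involution.
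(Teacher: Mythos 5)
Your proposal is correct and is essentially the paper's own argument: the paper derives Corollary \ref{cor:O2bcd} from Corollary \ref{cor:O2acd} precisely by reversing the orientations of both components of the tangle used there, which conjugates the moves exactly as you describe ($\O1a\leftrightarrow\O1b$, $\O1c\leftrightarrow\O1d$, $\O2a\leftrightarrow\O2b$, the pair $\{\O2c,\O2d\}$ preserved, $\O3b$ fixed). You have merely made the bookkeeping of this symmetry explicit, which the paper leaves implicit.
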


\begin{rem}
In \cite[Theorem 5.4]{Tu} (and later \cite{Wu}) the set
$S_b\cup\O2a\cup\O2c$ is considered as a generating set. Fortunately
(V.~Turaev, personal communication), an addition of $\O2d$ does not
change the proof of the invariance in \cite[Theorem 5.4]{Tu}.
\end{rem}

Note that the above corollaries imply that any generating set $S$
which contains only one move, $\O3b$, of type three, should contain
at least two $\O2$ moves. This concludes Step 2 of the proof.

Since at the same time such a set $S$ should contain at least two
$\O1$ moves by Lemma \ref{lem:Oe}, we conclude that if $S$ consists
of five moves, there should be exactly two $\O2$ moves and two $\O1$
moves. This simple observation allows us to eliminate the last
remaining case:

\begin{lem}\label{lem:O2abO3b}
Let $S$ be a set which consists of two $\O1$ moves, $\O2a$, $\O2b$,
and $\O3b$. Then $S$ is not generating.
\end{lem}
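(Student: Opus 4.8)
The plan is to construct an invariant of the moves in $S = \{\O1?, \O1?, \O2a, \O2b, \O3b\}$ which is not preserved by $\O2c$ (equivalently, by $\O2d$, after an orientation reversal), thereby showing that $\O2c$ cannot be generated. As in Step~2, an algebraic invariant respecting composition will not work, so I would again use a positivity-type property of $(2,2)$-tangle diagrams, but now the relevant weighting must be insensitive to \emph{both} $\O2a$ and $\O2b$ while still being spoiled by $\O2c$. The key structural difference to exploit is that $\O2c$ and $\O2d$ create a crossing pair in which one strand passes \emph{entirely over} (or entirely under) the other, whereas $\O2a$ and $\O2b$ create a pair in which the two strands exchange over/under roles between the two new crossings; $\O3b$ involves only positive crossings. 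So I would look for a weighting rule on arcs that is symmetric enough to be unchanged by $\O2a$ and $\O2b$ but detects the one-sided $\O2c$.

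Concretely, first I would set up the combinatorial framework: take a $(2,2)$-tangle $D$ with two ordered oriented components $D_1,D_2$, and define weights on the arcs of, say, $D_1$ by adding the local sign each time $D_1$ crosses $D_2$ \emph{regardless of over/under} (this two-sided count is exactly what makes $\O2a$ and $\O2b$ harmless, since each adds a cancelling $\pm$ pair along $D_1$). Then I would check that $\O1$ moves do not affect inter-component crossings, that $\O3b$ preserves all weights because all its crossings are positive, and that $\O2a$, $\O2b$ each change the weight of the newly created middle arc only by $+1$ then $-1$ (or the reverse), so positivity of $D_1$ is maintained, exactly mirroring the three preceding lemmas. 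Second, I would verify that $\O2c$ applied to $D_1$ over $D_2$ inserts two crossings of the \emph{same} strand role, producing a middle arc whose weight drops by $1$ below its neighbours, so positivity of $D_1$ can fail. Finally, I would exhibit an explicit $D$ with $D_1$ positively weighted and a diagram $D'$ obtainable from $D$ by a single $\O2c$ with $D_1$ not positively weighted — the pair in Figure~\ref{fig:weights1}b,c already does essentially this — and conclude by the orientation-reversal symmetry that $\O2d$ is likewise not generated, so $S$ generates neither $\O2c$ nor $\O2d$ and hence is not generating.

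The main obstacle I anticipate is choosing the weighting rule so that it is simultaneously invariant under $\O2a$ \emph{and} $\O2b$: the earlier lemmas only needed invariance under $\O2a$, and the naive fix (count both over- and undercrossings) must be checked carefully against all sign/orientation configurations of $\O2b$, since $\O2b$ has the strands antiparallel and it is easy to get a configuration where the inserted pair is $+,+$ or $-,-$ rather than $+,-$. If a single two-sided weighting does not cleanly work, the fallback is to use \emph{two} separate weightings (one from over-crossings as before, one from under-crossings) and declare $D_1$ ``good'' only if it is positively weighted in \emph{both}; $\O2a$ and $\O2b$ then each spoil at most one of the two weightings but compensate, whereas $\O2c$ can break one of them outright. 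Either way the remaining verifications are the same short case-checks against Figure~\ref{fig:weights2} that appear in the preceding lemmas, so once the right positivity notion is pinned down the proof is routine.
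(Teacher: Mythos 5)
Your plan stands or falls on producing a positivity-type weight that is preserved by \emph{both} $\O2a$ and $\O2b$, and this is exactly where it fails. In every oriented $\O2$ move the two new crossings have opposite signs, so their total contribution cancels; but positivity is a condition on the \emph{middle} arc, and there the order in which the pair is met matters. Along each strand, $\O2a$ inserts the positive crossing first and the negative one second, whereas $\O2b$ inserts the negative one first (this is visible in the paper's own data: Figure \ref{fig:weights1}c, obtained from the trivial tangle by $\O2b$, has middle arcs of weight $-1$). Consequently your two-sided weighting (sum the signs of \emph{all} crossings of $D_1$ with $D_2$) is not invariant under $\O2b$: the new middle arc of $D_1$ has weight one less than its neighbours, and applying $\O2b$ to the positively weighted tangle of Figure \ref{fig:weights1}b immediately creates a $-1$ arc. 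Since $\O2b$ lies in the set $S$ that the invariant must respect, the argument collapses. Your fallback (two weightings, by over- and by undercrossings, requiring positivity of both) fails for the same reason: if $D_1$ is the over-strand of an $\O2b$, the over-weighting acquires a $-1$ middle arc and nothing compensates --- the conjunction of the two positivity conditions is destroyed outright. This is structural, not a matter of fine-tuning: any signed arc-weighting that counts the inserted pair sees the ``$-$ then $+$'' order of $\O2b$ (or, for the complementary choice of counted crossings, of $\O2a$), which is precisely why positivity was only usable in Step 2, where at most one of $\O2a$, $\O2b$ was present.

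The paper handles this last case with a different invariant adapted to the fact that $\O2a$, $\O2b$, $\O3b$ are braid-like: smooth every crossing respecting orientation and count the numbers $C^+$, $C^-$ of counterclockwise and clockwise circles of the smoothed diagram. The moves $\O2a$, $\O2b$, $\O3b$ do not change the smoothed diagram up to isotopy, hence preserve $C^+$ and $C^-$, while each $\O1$ move adds one circle of a definite orientation; combining $C^\pm$ with $w$ and $rot$ then gives, for each possible pair of $\O1$ moves in $S$, a quantity preserved by all of $S$ but changed by $\O2c$ and $\O2d$ (which alter $C^++C^-$ while fixing $w$ and $rot=C^+-C^-$). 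If you wish to salvage your route, you need an invariant insensitive to the order of the inserted $\pm$ pair, and that essentially forces something like this smoothed-diagram count rather than arc positivity; as written, your proof does not go through.
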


\begin{proof}
Given a link diagram, smooth all double points of the diagram
respecting the orientation, as illustrated in Figure
\ref{fig:smooth}.

\begin{figure}[htb]
\centerline{\includegraphics[height=0.72in]{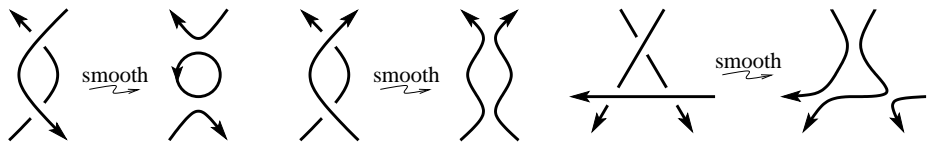}}
\caption{\label{fig:smooth} Smoothing the diagram respecting the
orientation}
\end{figure}

Count the numbers $C^-$ and $C^+$ of clockwise and counter-clockwise
oriented circles of the smoothed diagram, respectively. Note that
$\O2a$, $\O2b$, and $\O3b$ preserve an isotopy class of the smoothed
diagram, thus preserve both $C^+$ and $C^-$. On the other hand,
$\O1b$ and $\O1d$ add one to $C^+$, and $\O1a$, $\O1c$ add one to
$C^-$. Thus if $S$ contains $\O1a$ and $\O1c$, all moves of $S$
preserve $C^+$. The case of $\O1b$ and $\O1d$ is obtained by the
reversal of an orientation (of all components) of the link. If $S$
contains $\O1a$ and $\O1b$, all moves of $S$ preserve $C^++C^--w$.
Similarly, if $S$ contains $\O1c$ and $\O1d$, all moves of $S$
preserve $C^++C^-+w$. In all the above cases, moves from $S$ can not
generate $\O2c$, $\O2d$, since each of $\O2c$ and $\O2d$ may change
$C^+$ as well as $C^++C^-\pm w$ (while preserving $w$ and
$C^+-C^-=rot$).
\end{proof}

This concludes the proof of Theorem \ref{thm:othersets}.



\begin{thebibliography}{99}

\bibitem{CDM} S.~Chmutov, S.~Duzhin, J.~Mostovoy,
   {\it CDBooK. Introduction to Vassiliev Knot invariants.}
   (a preliminary draft version of a book about Chord Diagrams.)
    \verb#http://www.math.ohio-state.edu/~chmutov/preprints/#.

\bibitem{Co} A.~Coward, {\it Ordering the Reidemeister moves
of a classical knot}, Alg. Geom. Topology {\bf 6} (2006) 659--671.

\bibitem{Ka} L.~Kauffman, {\it Knots and Physics}, World
   Scientific Publishing, 3-d edition, 1993.

\bibitem{Me}  D.~Meyer, {\it State models for link invariants
from the classical Lie algebras}, Knots 90 (A.~Kawauchi, ed.), de
Gruyter 1992, 559--592.

\bibitem{Oe} Olof-Petter \"{O}stlund, {\it
   Invariants of knot diagrams and relations among Reidemeister moves},
   Journal of Knot Theory and its Ramifications, {\bf 10}(8) (2001)
   1215--1227. Preprint {\tt arXiv:math.GT/0005108}.

\bibitem{Re} K.~Reidemeister, {\it Knottenundgruppen},
Abh.Math.Sem.Univ.Hamburg, 1927.

\bibitem{Tu} V.~Turaev, {\it The Yang-Baxter equation and invariants
of links}, Inventiones Mathematicae {\bf 92} (1988) 527--553.

\bibitem{Vo} P.~Vogel, {\it Representation of links by braids:
A new algorithm}, Comm. Math. Helvetici {\bf 65} (1990) 104--113.

\bibitem{Wu} F.Y.~Wu, {\it Knot Theory and Statistical
Mechanics}, Rev. Mod. Phys. {\bf 64} (1992) 1099--1131.

\end{thebibliography}
\end{document}